\newtheorem{theorem}{Theorem}
\newtheorem{lemma}[theorem]{Lemma}
\newtheorem{proposition}[theorem]{Proposition}
\newtheorem{conjecture}{Conjecture}
\def\ca{\alpha}
\def\cb{\beta}
\def\cc{\gamma}
\def\cd{\delta}
\def\ce{\varepsilon}
\def\cf{\varphi}
\begin{document}
\title{Packing six $T$-joins in plane graphs}
\author{Zden{\v e}k Dvo{\v r}{\'a}k\thanks{
          Computer Science Institute, Faculty of Mathematics and
          Physics, Charles University, Malostransk{\'e} n{\'a}m{\v e}st{\'\i} 25,
          118 00 Prague, Czech Republic. E-mail: \texttt{rakdver@kam.mff.cuni.cz}.
          The Institute for Theoretical Computer Science (ITI) is supported by
          Ministry of Education of the Czech Republic as project 1M0545.
          This research has been supported by grant GACR 201/09/0197.}\and
        Ken-ichi Kawarabayashi\thanks{National Institute of Informatics, 2-1-2,
        Hitotsubashi, Chiyoda-ku, Tokyo 101-8430, Japan. E-mail: \texttt{k\_keniti@nii.ac.jp}.
        JST ERATO Kawarabayashi Large Graph project.
         Research partly supported by Japan Society for the Promotion of Science,
Grant-in-Aid for Scientific Research, by C\&C Foundation, by
Kayamori Foundation and by Inoue Research Award for Young
Scientists.}\and
    Daniel Kr{\'a}l'\thanks{
          Mathematics Institute, DIMAP and Department of Computer Science,
	  University of Warwick, Coventry CV4 7AL, UK.
	  E-mail: \texttt{d.kral@warwick.ac.uk}.
          Previous affiliation:
	  Institute for Theoretical Computer Science, Faculty of Mathematics and
          Physics, Charles University, Malostransk{\'e} n{\'a}m{\v e}st{\'\i} 25,
          118 00 Prague, Czech Republic.
          The Institute for Theoretical Computer Science (ITI) is supported by
          Ministry of Education of the Czech Republic as project 1M0545.
          This research has been supported by grant GACR 201/09/0197.
          A major revision of the paper was done during a visit of this author to the Institut Mittag-Leffler (Djursholm, Sweden).}}
\date{}
\maketitle
\begin{abstract}
Let $G$ be a plane graph and $T$ an even subset of its vertices.
It has been conjectured that if all $T$-cuts of $G$ have the same parity and
the size of every $T$-cut is at least $k$, then $G$ contains $k$ edge-disjoint $T$-joins.
The case $k=3$ is equivalent to the Four Color Theorem, and
the cases $k=4$, which was conjectured by Seymour, and $k=5$ were proved by Guenin.
We settle the next open case $k=6$.
\end{abstract}

\section{Introduction}

We study packings of $T$-joins in plane graphs.
Let $G$ be a graph and $T$ an even-size subset of its vertices.
A {\em $T$-join} is a subgraph $H$ of $G$ such that
the odd-degree vertices of $H$ are precisely those in $T$.
A {\em cut} is a partition of the vertex set of a graph $G$ into two sets $A$ and $B$,
which we refer to as {\em sides\/}; the {\em size} of the cut is the number of edges
with one end-vertex in $A$ and the other end-vertex in $B$.
A cut is {\em trivial} if one its sides consists of a single vertex and
a cut is {\em odd} if the size of $A$ is odd.
Finally, a {\em $T$-cut} is a cut such that $|T\cap A|$ is odd.

Clearly, if $G$ has a $T$-cut of size $k$,
it cannot have more than $k$ edge-disjoint $T$-joins.
We are interested when the converse is also true.
Seymour~\cite{bib-seymour} (also see Problem 12.18 in~\cite{bib-graphcol})
conjectured the following for $k=4$.

\begin{conjecture}
\label{conj}
Let $G$ be a plane graph and $T$ an even-size subset of its vertices.
If the sizes of all $T$-cuts in $G$ have the same parity and
the size of every $T$-cut is at least $k$, then $G$ contains $k$ edge-disjoint $T$-joins.
\end{conjecture}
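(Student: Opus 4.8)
\medskip
\noindent\textbf{Proof strategy for the case $k=6$.}
Suppose Conjecture~\ref{conj} fails for $k=6$ and let $(G,T)$ be a counterexample with $|V(G)|+|E(G)|$ minimum. I would first normalize $(G,T)$. If $G$ is disconnected or has a cut-vertex then a smaller counterexample occurs in one of the blocks, so $G$ is $2$-connected. The parity hypothesis is very rigid: writing $D$ for the set of odd-degree vertices of $G$, the requirement that all $T$-cuts have the same parity forces $D=\emptyset$ or $D=T$, so either $G$ is Eulerian and every $T$-cut has even size $\ge 6$, or $T$ is exactly the set of odd-degree vertices and every $T$-cut has odd size $\ge 7$; in particular $T\neq\emptyset$. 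Next, if $G$ has a \emph{non-trivial} $T$-cut $\delta(A)$ of size exactly $6$, contract $A$ to one vertex and $V(G)\setminus A$ to one vertex; this yields two strictly smaller plane graphs, each of which inherits the parity hypothesis and the bound $\ge 6$ on $T$-cuts, hence has six edge-disjoint $T$-joins. Since any six edge-disjoint $T$-joins must meet a $T$-cut of size six in exactly one edge each, on each side these six joins split $\delta(A)$ into its six individual edges; matching the two families edge by edge along $\delta(A)$ produces six edge-disjoint $T$-joins of $G$, a contradiction. Thus \emph{every $T$-cut of size $6$ is trivial}. A few further local reductions --- suppressing degree-two vertices, and deleting or contracting edges whose removal preserves the hypotheses while decreasing $|V(G)|+|E(G)|$ --- let us assume $G$ carries no small reducible configuration; I expect this to leave $G$ essentially internally $6$-edge-connected with respect to $T$, with bounded local structure.

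\medskip
\noindent I would then pass to the plane dual $G^{*}$, in which $T$ becomes a distinguished set of faces. Under the standard identification of the cycle space of $G$ with the cut space of $G^{*}$, the set of $T$-joins of $G$ is a single coset of the cut space of $G^{*}$, so a packing of six edge-disjoint $T$-joins in $G$ is equivalent to a ``$6$-colouring of the faces of $G^{*}$ twisted by $T$'' --- the flow/tension dual of such a packing --- and the hypothesis that all $T$-cuts have size $\ge 6$ becomes a connectivity- and girth-type condition on $G^{*}$ which the reductions above make quantitative. Through exactly this dictionary the case $k=3$ is the statement that every bridgeless cubic plane graph is $3$-edge-colourable, i.e.\ the Four Colour Theorem; so, as Guenin did for $k=4$ and $k=5$, the plan is to reduce the $k=6$ instance to the Four Colour Theorem, now over a larger colour set.

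\medskip
\noindent Concretely, I would factor the colour set as $\mathbb{Z}_{2}\times\mathbb{Z}_{3}$ and split the desired twisted $6$-colouring of $G^{*}$ into a twisted $2$-colouring and a twisted $3$-colouring. The $\mathbb{Z}_{2}$-part is a pure parity question and is easy, essentially subsumed by the parity analysis above. The $\mathbb{Z}_{3}$-part carries the argument: the remaining constraints live on an auxiliary plane graph, and after a reduction to a cubic plane graph $H$ one applies the Four Colour Theorem to $H$ in Tait's $3$-edge-colouring form and pulls the resulting colouring back to the missing part of the packing in $G$. Two points must be arranged: (i) $H$ is \emph{bridgeless} --- this is exactly where the bound ``$T$-cut $\ge 6$'' and the absence of small non-trivial $T$-cuts from the first step are used --- and (ii) the pull-back is compatible both with the $T$-twist and with the $\mathbb{Z}_{2}$-part already chosen. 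The straightforward construction of $H$ breaks down near a bounded list of local configurations; these are eliminated by a discharging argument whose hypotheses are furnished by the normalization above, each unavoidable configuration being either directly colourable or excluded by minimality. I expect the reduction to the Four Colour Theorem --- designing $H$ so that it is bridgeless and its $3$-edge-colourings lift correctly, and carrying out the discharging with the two parity regimes (Eulerian versus $T=D$) treated in parallel --- to be the main obstacle; the remaining steps are largely bookkeeping.
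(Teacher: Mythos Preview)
Your opening normalization is sound and in fact matches the paper: one reduces (via Guenin's argument) to a $6$-regular plane graph $G$ with $T=V(G)$, where six edge-disjoint $T$-joins are exactly a proper $6$-edge-colouring, and your splitting argument on a non-trivial $T$-cut of size six is precisely Lemma~\ref{lm-odd-cut}. From that point on, however, your plan diverges from the paper and acquires a genuine gap.

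The paper does \emph{not} pass to the dual, does not factor the colour set as $\mathbb{Z}_2\times\mathbb{Z}_3$, and does not build an auxiliary cubic graph to which the Four Colour Theorem is applied. Instead it works directly with the $6$-regular plane graph and two technical devices inherited from Guenin: for every edge $e$ there is an \emph{$e$-colouring} (an almost-proper $6$-edge-colouring in which $e$ carries three colours), and for each colour $c$ there is a \emph{$c$-mate}, a small odd cut that certifies how the defect propagates. These, together with a local \emph{swap} operation on $4$- or $6$-tuples of boundary vertices (which preserves the lower bound on odd cuts by Lemma~\ref{lm-swap}), generate a long list of forbidden configurations around small faces and multigons (Lemmas~\ref{lm-3-face-trigon}--\ref{lm-danger-B}). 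A discharging argument on faces and multigons then yields the contradiction. None of this machinery appears in your outline.

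The specific gap in your route is the $\mathbb{Z}_2\times\mathbb{Z}_3$ step. Splitting a putative $6$-edge-colouring into a ``$\mathbb{Z}_2$-part'' and a ``$\mathbb{Z}_3$-part'' amounts, on the primal side, to first decomposing the $6$-regular graph into two $3$-regular spanning subgraphs and then $3$-edge-colouring each via Tait/FCT. But (i) the existence of such a decomposition into two \emph{bridgeless} cubic pieces is not a parity matter and is not implied by the hypothesis on $T$-cuts; and (ii) even granting bridgelessness, an arbitrary Tait colouring of each piece need not combine to six \emph{pairwise disjoint} perfect matchings---the two $3$-colourings interact, and your ``compatibility condition (ii)'' is exactly the whole problem, not a bookkeeping afterthought. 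Guenin's proofs for $k=4,5$ do not proceed by factoring the colour set either; they already rely on the $e$-colouring/mate mechanism, and the present paper extends that mechanism. Your sentence ``the straightforward construction of $H$ breaks down near a bounded list of local configurations; these are eliminated by a discharging argument'' is where the entire proof lives, and nothing in your outline indicates how to produce either the unavoidable set or the reducibility arguments without the $e$-colouring/mate/swap tools.
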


The case $k=3$ is equivalent to the Four Color Theorem. The cases
$k=4$ and $k=5$ were proved by Guenin~\cite{bib-guenin}. We remark
that the case $k=4$ implies the Four Color Theorem, as pointed out
by Seymour~\cite{bib-seymour}. Here, we prove the next open case.
Our main result is the following.

\begin{theorem}
\label{thm-main}
Let $G$ be a plane multigraph and $T$ an even subset of its vertices.
If every $T$-cut of $G$ has the same parity and
the size of every $T$-cut is at least six,
then $G$ contains six edge-disjoint $T$-joins.
\end{theorem}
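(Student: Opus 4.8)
\medskip
\noindent\textbf{Proof idea.}
The plan is to argue by contradiction, fixing a counterexample $(G,T)$ that minimises $|V(G)|+|E(G)|$, to peel off a single $T$-join, and to invoke Guenin's theorem for $k=5$. First I would record the routine reductions. The graph $G$ is connected (a component meeting $T$ in an odd number of vertices would be separated off along a $T$-cut of size $0$) and loopless; every vertex outside $T$ has degree at least $3$, since a vertex of degree at most $2$ outside $T$ is suppressed without changing the $T$-joins or the sizes of the relevant $T$-cuts; and every vertex of $T$ has degree at least six, because $\delta(v)$ is a $T$-cut for $v\in T$. The key structural reduction is that $G$ has no \emph{non-trivial} tight $T$-cut: if $(A,B)$ is a $T$-cut of the minimum size with $|A|,|B|\ge 2$, contract $B$ to a single vertex (placed in $T$, since $|T\cap B|$ is odd) to obtain a smaller graft $G_A$, and symmetrically $G_B$. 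By minimality each of $G_A,G_B$ has six edge-disjoint $T$-joins; since every $T$-join meets a $T$-cut in an odd number of edges and six positive odd numbers whose sum is at most six (even parity) or seven (odd parity) must all equal $1$, the six $T$-joins of $G_A$ are matched bijectively with those of $G_B$ through the single cut edge each one uses, and gluing matched pairs yields six edge-disjoint $T$-joins of $G$, a contradiction. (In the odd-parity case one cut edge is left unused on each side and one must first arrange that the two unused edges coincide; I would handle the odd-parity case throughout by a parallel argument, or reduce it to the even-parity case, at the cost of carrying this extra edge along.)

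After these reductions the only tight $T$-cuts are the trivial ones $\delta(v)$ with $v\in T$ and $\deg(v)$ equal to $6$ (or $7$), so these vertices are the sole obstruction. It now suffices to produce a single $T$-join $J$ for which $G-J$ still has all $T$-cuts of the same parity --- automatic, as removing $J$ changes each $T$-cut size by an even amount --- and of size at least five; Guenin's theorem for $k=5$ then supplies five edge-disjoint $T$-joins of $G-J$, and together with $J$ we obtain six in $G$. Since $J$ meets every $T$-cut an odd number of times, this asks that $J$ meet every $T$-cut of size $s$ in at most $s-5$ edges; in particular $J$ must have degree exactly one at every $T$-vertex of degree $6$ (and $7$) and be suitably ``thin'' across every small separation. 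An equivalent route, directly generalising the equivalence of the case $k=3$ with the Four Colour Theorem, is to split $E(G)$ into $E_1\cup E_2$ so that each part meets every $T$-cut in between $3$ and $s-3$ edges; then $E_1$ must be a $T$-join of $G$, both $E_1$ and $E_2$ induce grafts satisfying the hypotheses for $k=3$, and the Four Colour Theorem gives three edge-disjoint $T$-joins inside each, hence six in $G$.

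Producing this globally balanced $T$-join is the heart of the matter, and is where planarity is essential. I would pass to the plane dual $G^*$: via the correspondence between cuts of $G$ and cycles of $G^*$, a packing of six edge-disjoint $T$-joins of $G$ becomes a labelling of $E(G^*)$ by $\{0,1,\dots,6\}$ that around every face of $G^*$ behaves like a proper local $6$-edge-colouring at the $T$-faces of degree $6$ --- a colouring/flow-type object on $G^*$ that is the $k=6$ analogue of the nowhere-zero $4$-flow (equivalently, $4$-face-colouring) underlying $k=4$. The hypothesis that every $T$-cut has size at least six forces the plane graph on which this colouring problem has to be solved to avoid short cycles, and the required object can then be extracted from a Gr\"otzsch-type $3$-colouring theorem for such plane graphs. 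The parity hypothesis is exactly what makes this translation legitimate, which is why the statement fails without it.

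The hard part will be the interface between the two formulations: showing that, after the reductions above, the derived plane graph genuinely meets the precise hypotheses of the available Gr\"otzsch-type theorem --- in particular ruling out its forbidden short cycles, which correspond on the one hand to small non-trivial $T$-cuts (already excluded) and on the other hand to low-degree configurations surrounding a $T$-vertex of degree $6$ (or $7$) --- and correctly accommodating parallel edges and local $2$-edge-cuts. I expect further reductions will be needed before the colouring theorem applies: forbidding certain small separations and certain configurations around a $T$-vertex of degree $6$ or $7$, each eliminated by a local contraction together with a re-routing of a partial packing, with the main effort going into verifying that every such configuration is reducible while keeping all $T$-cuts of size at least six. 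Euler's formula together with a discharging argument then locates a reducible configuration, contradicting the minimality of $(G,T)$.
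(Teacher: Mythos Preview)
Your opening reductions are sound and indeed match the paper's first move: the paper quotes Guenin's argument that Conjecture~\ref{conj} need only be proved for $k$-regular plane graphs with $V(G)=T$, which is exactly what your contraction of a non-trivial tight $T$-cut accomplishes. From that point on one is proving Theorem~\ref{thm-edge-coloring}: a $6$-regular plane multigraph with no odd cut of size less than six is $6$-edge-colourable. Your instinct that the case $k=5$ is used is also right; it enters through Guenin's Lemma~2.5, which guarantees an \emph{$e$-colouring} (a near-proper $6$-edge-colouring that fails only at the single edge $e$) for every edge of a minimal counterexample.

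The gap is the paragraph where you claim the balanced $T$-join, or the partition $E=E_1\cup E_2$, ``can then be extracted from a Gr\"otzsch-type $3$-colouring theorem'' for the dual. No such theorem is available, and the translation you sketch does not set one up. The hypothesis that every $T$-cut has size at least six constrains only the \emph{odd} faces of $G^*$ (those dual to $T$-vertices or, more generally, to odd cuts); it imposes no girth condition whatsoever, since even faces and even cycles of $G^*$ can be arbitrarily short, and the $6$-regular graph under consideration is full of bigons and triangles. There is no off-the-shelf vertex-colouring statement for planar graphs that converts ``all odd cycles long'' into the edge-labelling you need, and the paper does not attempt any such reduction.

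What the paper actually does after the reduction to the $6$-regular case is a direct discharging proof on a minimal counterexample $G$ to Theorem~\ref{thm-edge-coloring}. The reducibility of configurations is established not by contraction-and-rerouting but by two problem-specific devices: (i) the $e$-colouring of Lemma~\ref{lm-e-coloring}, refined in Lemma~\ref{lm-e-trigon}, together with the associated \emph{mates} (Lemma~\ref{lm-mate} and Proposition~\ref{prop-mate}), which are tight odd cuts certifying where a near-colouring fails and which force many adjacent faces to be large; and (ii) a \emph{swap} operation on short closed curves through four or six vertices (Lemma~\ref{lm-swap}), which rearranges edges while keeping odd cuts large and lets one import a $6$-edge-colouring from a graph with more high-order multigons. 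These tools yield a long list of structural lemmas about $3$-, $4$-, $5$-, $6$-, $7$- and $8$-faces and the multigons adjacent to them (Lemmas~\ref{lm-3-face-trigon}--\ref{lm-danger-B}), and the discharging rules of Section~\ref{sub-rules} then give every face and multigon non-negative final charge, contradicting Euler's formula. Your final paragraph anticipates discharging, but it is the entire engine of the proof, run directly on the $6$-regular plane graph, not a device for reaching a pre-existing colouring theorem.
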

We note that the cases $k=7$ and $k=8$ of Conjecture~\ref{conj} have recently
been proven in~\cite{bib-tjoin7+,bib-tjoin8}.

Guenin~\cite{bib-guenin} argued that it suffices to prove Conjecture~\ref{conj}
for plane graphs $G$ with $V(G)=T$ that are $k$-regular, i.e., every vertex has degree $k$.
In such graphs, the existence of $k$ edge-disjoint $T$-joins is equivalent to the existence
of a $k$-edge-coloring; a {\em $k$-edge-coloring} is an assignment of $k$ colors
to the edges such that no vertex is incident with two edges of the same color.
Hence, Theorem~\ref{thm-main} for $k=6$ is equivalent to the next theorem
which we prove in the following sections of the paper.

\begin{theorem}
\label{thm-edge-coloring}
Let $G$ be a $6$-regular plane multigraph.
If $G$ has no odd cut of size less than $6$,
then $G$ has a $6$-edge-coloring.
\end{theorem}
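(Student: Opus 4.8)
I would argue by contradiction, taking a counterexample $G$ that minimizes $|V(G)|$ and, subject to that, $|E(G)|$. First I would pin down the connectivity of $G$. Since $G$ is $6$-regular, a parity count ($\sum_{v\in A}\deg v=6|A|$ is even) shows that every edge-cut of $G$ has even size, so $G$ has no bridge, no $3$-edge-cut and no $5$-edge-cut; a loop at $v$ would make $\{v\}$ a $T$-cut of size $4$, so $G$ is loopless; and $G$ is connected since otherwise we could colour its components separately. Note also that $V(G)=T$ has even size, so $|A|$ is odd iff $|B|$ is odd for every cut $(A,B)$. Next, if $(A,B)$ is a $T$-cut of size exactly $6$ with $|A|,|B|\ge 3$, I would contract $B$ to a single vertex to obtain $G_A$ and contract $A$ to obtain $G_B$; both are $6$-regular plane multigraphs, strictly smaller than $G$, and a short check shows neither has a $T$-cut of size less than $6$, so both are $6$-edge-colourable by minimality. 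Since the six cut-edges receive all six colours at the contracted vertex in each of $G_A$ and $G_B$, one may permute the colours of the colouring of $G_B$ so that the two colourings agree on the cut, and glue them into a $6$-edge-colouring of $G$. A more delicate uncrossing argument (splitting off a pair of cut-edges and contracting) disposes of cuts of size $2$ or $4$ — which necessarily have both sides of even size, the odd ones being excluded by hypothesis — and, as a by-product, bounds the edge-multiplicity of $G$ by $3$. After these reductions $G$ is $6$-edge-connected, loopless, of multiplicity at most $3$, and every nontrivial $T$-cut of size $6$ has both sides of even size.

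Now I would bring in the planar structure via discharging. Writing $\ell(f)$ for the length of a face $f$, Euler's formula together with $|E|=3|V|$ gives $|F|=2|V|+2$ and $\sum_f \ell(f)=6|V|$, hence $\sum_f(\ell(f)-3)=-6$. Assign each vertex the charge $\deg(v)-6=0$ and each face the charge $\ell(f)-3$; the total charge is the negative constant $-6$, and the only faces with negative initial charge are the digons (faces of length $2$), each carrying $-1$. The plan is to fix a list $\mathcal L$ of \emph{reducible} local configurations, argue that none of them occurs in the minimal $G$, and then design discharging rules — moving charge from faces of length at least $4$ toward nearby digons and clusters of short faces — so that, in the absence of every configuration in $\mathcal L$, each vertex and face ends with non-negative charge; this contradicts the total being $-6<0$. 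The difficulty of this step lies entirely in calibrating $\mathcal L$: it must be rich enough that each digon can gather charge at least $1$ from the larger faces around it while no larger face is overdrawn — so configurations such as a digon incident with a triangle, two digons at a common vertex, or short faces packed around a region of low combinatorial curvature must all be shown reducible — and in particular $\mathcal L$ must rule out the degenerate situation in which $G$ has only digons and triangles.

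The heart of the proof, and the step I expect to be the main obstacle, is establishing that each configuration in $\mathcal L$ is reducible. The scheme for a configuration $H\subseteq G$ is: modify $G$ inside $H$ (delete a few edges, split off parallel edges, or contract a short face) to obtain a strictly smaller $6$-regular plane multigraph $G'$ with no $T$-cut of size less than $6$; colour $G'$ by minimality; and extend the colouring across $H$. The extension is where the work lies: a naive extension fails because the colours missing at the two ends of a deleted edge need not coincide, so one must recolour Kempe chains — alternating two-coloured paths — to reconcile them, and the nonexistence of such a path is precisely what could obstruct reducibility. Proving that the required alternating paths exist, and that recolouring them leaves the rest of the colouring proper, is exactly where the $6$-edge-connectivity secured above is used, and it is delicate because these configurations are embedded among many short faces. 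When a reduced instance $G'$ turns out to be cubic, I would shortcut by invoking the Four Color Theorem in its edge-colouring form (every bridgeless cubic plane multigraph is $3$-edge-colourable); since the case $k=3$ of Conjecture~\ref{conj} is equivalent to the Four Color Theorem, some appeal to it — or a re-derivation of the needed special case — appears unavoidable. Finally one must check that $\mathcal L$ and the discharging rules close up consistently, which in practice means iterating between enlarging $\mathcal L$ and rebalancing the rules until the charge accounting works out.
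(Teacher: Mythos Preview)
Your high-level plan --- minimal counterexample plus discharging with faces carrying charge $\ell(f)-3$ --- matches the paper's framework, but the two load-bearing tools are absent from your proposal, and without them the scheme does not close.

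First, the paper does \emph{not} reduce configurations by deleting edges and extending a colouring of a strictly smaller graph via Kempe chains. Instead it uses two lemmas imported from Guenin: for every edge $e$ of a minimal counterexample there is an \emph{$e$-colouring} (a $6$-colouring proper everywhere except that $e$ carries three colours, with a parity condition at every vertex), and for each colour $c$ there is a \emph{$c$-mate}, an odd cut through $e$ meeting every colour $c'\neq c$ exactly once. The mates, combined with the fact that non-trivial odd cuts have size $\ge 8$, force many edges of a given colour to pile up on faces near $e$; this is what proves the structural lemmas (e.g.\ ``a face adjacent to a quadragon is $\ge 5$-big''). Your Kempe-chain extension idea does not give this kind of information about the \emph{neighbourhood} of a multigon, and the discharging balance genuinely needs it: the rules send charge from big faces to multigons, and ``big'' is defined via adjacency to $\ge 4$-faces, which only the mate argument certifies.

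Second, your minimality criterion (minimise $|V|$, then $|E|$) is too coarse. The paper orders counterexamples by: smallest $|V|$; then \emph{maximum} number of quadragons; then maximum number of trigons; then maximum number of bigons. The reason is the \emph{swap} operation: along a short closed curve through $v_1,\dots,v_k$ one deletes edges $v_2v_3,v_4v_5,\dots$ and inserts $v_1v_2,v_3v_4,\dots$, preserving $|V|$ and $|E|$ but increasing the order of some multigon. Under the paper's ordering the swapped graph is strictly ``smaller'', hence $6$-edge-colourable, and one then argues that this colouring can be pulled back to $G$ (or yields a usable $e$-colouring). Under your ordering a swap gives a graph of the same size, so you cannot invoke minimality. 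Several of the key exclusions (e.g.\ a $4$-face adjacent to a trigon and another multigon, a $6$-face adjacent to three trigons and a bigon, the $8$-face configuration) are proved exactly this way and have no visible substitute via contract-and-extend.
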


Note that the condition that $G$ has no odd cut of size less than six
implies that the number of vertices of $G$ is even (otherwise, consider
a cut with one of the sides empty).
Let us remark that Conjecture~\ref{conj} would be implied by the following
more general conjecture of Seymour (replacing the condition of not containing
Petersen by a stronger condition of being planar yields a statement equivalent
to Conjecture~\ref{conj}).

\begin{conjecture}
\label{conj-petersen}
Let $G$ be a $k$-regular graph with no Petersen minor.
The graph $G$ is $k$-edge-colorable if and only if every odd cut of $G$ has size at least $k$.
\end{conjecture}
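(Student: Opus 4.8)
\medskip
\noindent\textbf{A proof strategy.} The forward implication is immediate: a $k$-edge-coloring partitions $E(G)$ into $k$ perfect matchings, and since $G$ is $k$-regular any odd cut $\delta(A)$ with $|A|$ odd meets each perfect matching in an odd, hence positive, number of edges; summing over the $k$ color classes gives $|\delta(A)|\ge k$. The entire difficulty lies in the converse, so from now on assume that $G$ has no Petersen minor and is \emph{oddly $k$-edge-connected}, i.e.\ every odd cut has size at least $k$ (such graphs are the \emph{$k$-graphs}), and we must produce $k$ pairwise edge-disjoint perfect matchings. The plan is an induction that first peels the problem into smaller pieces along small odd cuts and then, on the indecomposable pieces, reduces the regularity $k$.

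\medskip
The first reduction is the standard one for $k$-graphs. Suppose $G$ has a nontrivial odd cut $\delta(X)$ of size exactly $k$, and contract each side in turn: let $G_1=G/(V(G)\setminus X)$ and $G_2=G/X$. Because the cut has size exactly $k$, each contracted vertex again has degree $k$, so $G_1$ and $G_2$ are $k$-regular; a short argument shows that odd cuts of $G_i$ pull back to odd cuts of $G$ of the same size, so $G_1,G_2$ remain $k$-graphs, and as minors of $G$ they inherit the absence of a Petersen minor. In any $k$-edge-coloring of $G_i$ the $k$ edges of the cut receive all $k$ colors, so after a permutation of colors the colorings of $G_1$ and $G_2$ agree on the cut and glue to a $k$-edge-coloring of $G$. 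Thus by induction on $|V(G)|$ it suffices to treat the case where every nontrivial odd cut has size strictly greater than $k$; since in a $k$-graph every tight cut has size exactly $k$ (this follows from $\frac{1}{k}\mathbf 1$ lying in the perfect matching polytope), such a $G$ has no nontrivial tight cut and is therefore a \emph{brick} (if non-bipartite) or a \emph{brace} (if bipartite), in the sense of the Edmonds--Lov\'asz--Pulleyblank tight-cut decomposition.

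\medskip
If $G$ is bipartite we are done by K\H{o}nig's edge-coloring theorem, since a $k$-regular bipartite graph is always $k$-edge-colorable. The essential case is that of a brick. Here the Petersen graph itself is the prototypical obstruction --- it is a $3$-graph that is not $3$-edge-colorable --- which is exactly why the no-Petersen-minor hypothesis must enter. The natural attack is to lower $k$: I would try to find a single perfect matching $M$ such that $G-M$ is again a $(k-1)$-graph with no Petersen minor, and then apply the inductive hypothesis for $k-1$, the base case $k=3$ being the theorem of Edwards, Sanders, Seymour and Thomas that bridgeless cubic graphs with no Petersen minor are $3$-edge-colorable. To control the minor condition through this step I would combine the matching-lattice theory of de~Carvalho, Lucchesi and Murty, which singles out the Petersen graph as essentially the only obstructing brick, with a structure theorem for graphs excluding a Petersen minor in the spirit of Robertson, Seymour and Thomas.

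\medskip
The main obstacle is precisely this last inductive step. Even the existence of a single removable perfect matching leaving a $(k-1)$-graph is a notoriously hard question --- for $k=3$ it is entangled with the Berge--Fulkerson conjecture --- and here it must be accomplished while simultaneously preserving oddly $(k-1)$-edge-connectivity and the exclusion of a Petersen minor, and while avoiding the possibility that the resulting brick is itself Petersen-like. Unlike the cubic case, no sufficiently explicit minor-structure description of $k$-graph bricks is presently available for general $k$, so the crux of any complete proof is to develop such a structural handle --- or, alternatively, a direct rounding of Seymour's fractional $k$-edge-coloring $\frac{1}{k}\mathbf 1$ to an integral decomposition --- strong enough to force an honest $k$-edge-coloring whenever the Petersen obstruction is absent.
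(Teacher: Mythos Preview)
The statement you are addressing is labeled \emph{Conjecture}~\ref{conj-petersen} in the paper, and the paper offers no proof of it; it is stated as an open problem, with only the remark that the case $k=3$ is Tutte's three-edge-coloring conjecture, announced by Robertson, Sanders, Seymour and Thomas. There is therefore no ``paper's own proof'' to compare against.

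Your write-up is honest about this: you present a strategy, not a proof, and you correctly isolate the easy direction (the parity argument showing that a $k$-edge-coloring forces every odd cut to have size at least $k$) from the hard one. The reductions you sketch --- contracting a tight odd $k$-cut, passing to bricks and braces via the tight-cut decomposition, disposing of the bipartite case by K\H{o}nig --- are standard and sound. But the step you yourself flag as ``the main obstacle'' is not merely a technicality: finding a perfect matching $M$ such that $G-M$ is again an oddly $(k-1)$-edge-connected graph with no Petersen minor is not known in general, and your appeal to matching-lattice theory and Robertson--Seymour--Thomas structure does not close this gap. In other words, your proposal accurately surveys the landscape around an open problem, but it is not a proof, and there is none in the paper to weigh it against.
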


The case $k=3$ is Tutte's well-known three-edge-coloring
conjecture, whose solution has been announced by Robertson, Sanders,
Seymour and Thomas (see \cite{rst}). Indeed, the case $k=3$ is a
special case of another well known conjecture by Tutte, which is
known as Tutte's four flow conjecture.

Conjecture~\ref{conj-petersen} would also imply the following conjecture of Conforti and Johnson~\cite{bib-techrep},
also see~\cite{bib-cornuejols}.

\begin{conjecture}
\label{conj-postman}
Let $G$ be a graph with no Petersen minor and $T$ a set of its odd-degree vertices.
Then, the maximum number of edge-disjoint $T$-joins is equal to the size of the smallest $T$-cut.
\end{conjecture}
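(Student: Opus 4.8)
The plan is to prove Conjecture~\ref{conj-postman} by reducing it to the regular edge-colouring statement of Conjecture~\ref{conj-petersen}, and then to attack the latter through the structure theory of graphs with no Petersen minor, using the planar edge-colouring results of this paper as the base case. First I would dispose of the trivial inequality. Since $T$ is precisely the set of odd-degree vertices, every $T$-join meets every $T$-cut in an odd, hence positive, number of edges; therefore edge-disjoint $T$-joins use distinct edges across any fixed $T$-cut, and the number of edge-disjoint $T$-joins is at most the size $k$ of the smallest $T$-cut. The entire content is the reverse inequality, the construction of $k$ edge-disjoint $T$-joins, and this is where the no-Petersen-minor hypothesis must be used (the Petersen graph itself, with $T=V$, witnesses that $k$-regularity alone is not enough).

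Second, I would realize the implication asserted in the excerpt, that Conjecture~\ref{conj-petersen} implies Conjecture~\ref{conj-postman}, by a Guenin-type transformation. Given $(G,T)$ with $T$ the odd-degree vertices and smallest $T$-cut of size $k$, I would attach degree-raising gadgets so that the resulting graph $G'$ is $k$-regular, while ensuring that (i) $G'$ contains no Petersen minor, and (ii) $G'$ has no odd cut of size less than $k$. The gadgets must be designed so that in \emph{every} $k$-edge-colouring of $G'$ each colour class restricts on $E(G)$ to a $T$-join; then the $k$ colour classes pull back to $k$ edge-disjoint $T$-joins of $G$. Verifying that the gadget preserves the odd-cut bound and introduces no Petersen minor is the delicate bookkeeping of this step; when $G$ is planar the gadgets can be chosen planar and one lands exactly in the planar setting of Conjecture~\ref{conj}, whose case $k=6$ is Theorem~\ref{thm-edge-coloring}.

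Third, and this is the heart, I would prove Conjecture~\ref{conj-petersen} for a $k$-regular graph $G'$ with no Petersen minor by induction on $|V(G')|$, using the structure theorem of Robertson, Seymour and Thomas: every graph with no Petersen minor is obtained by clique-sums of bounded order from planar graphs, graphs embeddable in a fixed bounded-genus surface, and a bounded list of apex-like pieces. On the planar pieces I would invoke the planar Conjecture~\ref{conj}; the bounded-genus and apex pieces have bounded size and can be coloured by a finite case analysis or absorbed into a neighbouring piece. The main obstacle is the gluing across the small clique-sum cuts: one must combine the local edge-colourings (equivalently, the local $T$-join packings) of the two sides so that the colours agree on the few edges of the cut. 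This demands control over which boundary colour patterns are realizable on each side, i.e.\ a boundary-pattern/parity lemma that is uniform over all small cuts produced by the decomposition; making this compatibility argument work simultaneously with the odd-cut condition is exactly the step I expect to be hardest.

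Finally, I would flag the genuine limitation of this route. The planar base case of the induction requires the full planar Conjecture~\ref{conj}, which is presently known only for $k\le 6$, the case $k=6$ being Theorem~\ref{thm-main}. Thus the plan does not unconditionally settle Conjecture~\ref{conj-postman}; rather, it reduces the Petersen-minor-free statement to two ingredients, namely the full planar conjecture and a clique-sum gluing lemma for edge-colourings, and it identifies these as the precise additional tools needed to lift the planar theory developed here to the class of all graphs with no Petersen minor.
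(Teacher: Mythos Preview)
The statement you are addressing is Conjecture~\ref{conj-postman}, an \emph{open} conjecture of Conforti and Johnson; the paper does not prove it and offers no argument beyond the single remark that Conjecture~\ref{conj-petersen} would imply it. There is therefore no ``paper's own proof'' to compare against.

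Your proposal is not a proof either, and you say so yourself in the final paragraph. What you have written is a research programme that rests on at least three ingredients, each of which is open: (i) the full planar Conjecture~\ref{conj} for all $k$ (only $k\le 6$ is known, the case $k=6$ being the content of this paper); (ii) Conjecture~\ref{conj-petersen} itself, which you propose to derive from a structure theorem plus gluing; and (iii) a compatibility lemma for patching $k$-edge-colourings across the small cutsets of a clique-sum decomposition. Item (iii) in particular is notoriously hard: even for $k=3$ the announced proof of Tutte's conjecture by Robertson, Sanders, Seymour and Thomas does not proceed by naively gluing edge-colourings across clique-sums, precisely because the realisable boundary patterns are difficult to control.

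Two further technical points deserve caution. First, the structure theorem for graphs with no Petersen minor is considerably more intricate than ``clique-sums of planar, bounded-genus, and apex pieces''; the actual decomposition involves double-cross and apex structures specific to the Petersen case, and the pieces are not of bounded size. Second, the gadget step in your reduction (raising degrees to $k$ while preserving both the odd-cut bound and the absence of a Petersen minor) is not automatic outside the planar setting; Guenin's reduction cited in the paper is planar-specific, and producing Petersen-minor-free gadgets in general is itself a nontrivial design problem. In short, your outline correctly identifies the shape of the implication the paper alludes to, but it does not constitute a proof, and the paper makes no claim that one exists.
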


Conjectures \ref{conj}, \ref{conj-petersen} and \ref{conj-postman}
have attracted attention of many researchers, because they are connected
not only to $T$-joins, $T$-cuts and edge-coloring but also to cycle
covers and flows. For more details, we refer the reader to the books
by Cornu{\'e}jols \cite{bib-cornuejols} and by Schrijver \cite{lex},
respectively.

\section{Notation}

We now introduce notation used throughout the paper.
Since any graph satisfying the assumptions of Theorem~\ref{thm-edge-coloring} is $2$-connected,
the introduced notation will be used only for $2$-connected graphs.
A vertex of degree $d$ is called a {\em $d$-vertex}.
An {\em $\ge d$-vertex} is a vertex of degree at least $d$ and
an {\em $\le d$-vertex} is a vertex of degree at most $d$.
In a $2$-connected plane graph, a $d$-face is a face incident
with exactly $d$ edges. Analogously to vertices,
we use an $\le d$-face and an $\ge d$-face.

A {\em bigon} is a $2$-face and
a {\em multigon} is a maximal sequence of bigons such that every pair of consecutive bigons share an edge.
The {\em order} of a multigon is the number of edges forming it,
i.e., the number of bigons forming it increased by one.
Multigons of order three are called {\em trigons} and those of order four {\em quadragons}.
Two multigons are {\em incident} if they share a vertex.
If $f$ is a face, then two multigons are {\em $f$-incident} if they contain
edges consecutive on the boundary of $f$.
Similarly, a multigon and a face are $f$-incident if they contain edges consecutive on the boundary of $f$.
A multigon and a face are {\em adjacent} if they contain the same edge;
similarly, two faces are {\em adjacent} if they contain the same edge.

We say that a face is {\em $k$-big} if it is adjacent to exactly $k$ different $\ge 4$-faces.
A face is {\em $\le\ell$-big} if it is $k$-big for $k\le\ell$. We use {\em $\ge\ell$-big} in the analogous way.
A $5$-face $f$ is {\em dangerous}
if $f$ is adjacent to two trigons $f$-incident with the same bigon and
$f$ is adjacent to no other multigons.
Finally, a trigon $t$ is {\em dangerous} if $t$ is adjacent to a dangerous $5$-face.

The rest of the paper is devoted to proving Theorem~\ref{thm-edge-coloring}.
With respect to this proof,
a plane graph $G$ is said to be a {\em minimal counterexample}
if $G$ satisfies the assumptions of Theorem~\ref{thm-edge-coloring}, i.e.,
\begin{itemize}
\item $G$ is $6$-regular, and
\item every odd cut of $G$ has size at least six (note that these two properties imply that $G$ is $2$-connected), and
\end{itemize}
$G$ has no $6$-edge-coloring, and it also holds that
\begin{itemize}
\item subject to the previous conditions, $G$ has the smallest number of vertices,
\item subject to the previous conditions, $G$ has as many quadragons as possible,
\item subject to the previous conditions, $G$ has as many trigons as possible, and
\item subject to the previous conditions, $G$ has as many bigons as possible.
\end{itemize}
We exclude the existence of a minimal counterexample which proves Theorem~\ref{thm-edge-coloring}.

In our arguments, we will often need to transform an edge-coloring to another one.
To simplify our arguments, we will use the letters $\ca$, $\cb$, $\cc$, $\cd$, $\ce$ and
$\cf$ to denote the colors used on edges.
If $G$ is a graph with maximum degree $d$ that is $d$-edge-colored,
then an {\em $\ca\cb$-chain}, where $\ca$ and $\cb$ are two colors used on the edges of $G$,
is a cycle or a maximal path formed by edges with the colors $\ca$ and $\cb$ only.
{\em Swapping} the colors the of edges on an $\ca\cb$-chain means recoloring
$\ca$-colored edges of the chain with $\cb$ and $\cb$-colored edges with $\ca$.

\section{Structure of a minimal counterexample}
\label{sect-structure}

In this section, we analyze the structure of a minimal counterexample; in the next section,
we then prove that there exists no minimal counterexample using the discharging method.

\subsection{Odd cuts}

We start with analyzing sizes and the structure of odd cuts in a minimal counterexample.
As the first step, we prove the following simple observation.

\begin{lemma}
\label{lm-odd-cut}
Every non-trivial odd cut in a minimal counterexample $G$ has size at least eight.
\end{lemma}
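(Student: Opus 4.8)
The plan is to argue by contradiction: suppose $G$ is a minimal counterexample containing a non-trivial odd cut $(A,B)$ whose size is strictly less than eight. Since all odd cuts have size at least six and all cut sizes have the same parity (here, even, because $G$ is $6$-regular with an even number of vertices), such a cut must have size exactly six. I would then perform the standard reduction along this cut: for each side $S \in \{A,B\}$, form a smaller graph $G_S$ by contracting the other side to a single new vertex $v_S$. Because the cut has size six and $G$ is $6$-regular, the new vertex $v_S$ has degree six, so $G_S$ is again $6$-regular (multiple edges and loops are handled in the usual way; a contracted side cannot create a loop here since the cut is non-trivial on both sides after contraction, but if it did one would delete it, lowering the degree — this case must be checked or excluded). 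Each $G_S$ has strictly fewer vertices than $G$ because the cut is non-trivial, so each $G_S$ is not a counterexample; I must verify that $G_S$ still satisfies the hypotheses, i.e. that every odd cut of $G_S$ has size at least six, which follows because an odd cut of $G_S$ corresponds to an odd cut of $G$ (odd parity is preserved since $|A|$ has a fixed parity and the contracted vertex carries the parity of the contracted side) of the same size.

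Having established that both $G_A$ and $G_B$ admit $6$-edge-colorings, the next step is to glue these two colorings along the six cut edges. The six cut edges appear in $G_A$ as the six edges incident with $v_A$, hence they receive all six distinct colors $\ca,\cb,\cc,\cd,\ce,\cf$ in the coloring of $G_A$; likewise in $G_B$ the corresponding six edges (incident with $v_B$) receive all six colors. By permuting the color classes of the coloring of $G_B$ — a permutation of the six colors preserves the property of being a proper $6$-edge-coloring — I can assume that each cut edge receives the same color in both colorings. Then the union of the two colorings restricted to their ``own'' sides (all edges of $G$ with both ends in $A$ coming from $G_A$, all edges with both ends in $B$ coming from $G_B$, and the cut edges colored consistently) is a proper $6$-edge-coloring of $G$: properness at a vertex in $A$ is inherited from $G_A$, properness at a vertex in $B$ from $G_B$, and the cut edges at each endpoint are among the edges seen by one of the two subcolorings. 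This contradicts the assumption that $G$ has no $6$-edge-coloring.

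The one subtlety I expect to be the main obstacle — and the reason the lemma asserts ``at least eight'' rather than merely handling size exactly six directly — is ensuring that the reduced graphs $G_A$ and $G_B$ are genuinely admissible inputs, in particular that they are simple enough to invoke minimality and that the contraction does not secretly reduce a vertex degree or create a small odd cut. Since the problem already allows multigraphs, multiple edges are not an issue, but one should confirm that contracting a connected side (one may assume both sides induce connected subgraphs, else a smaller cut exists) to a single vertex yields a $6$-regular plane multigraph with no odd cut of size less than six; the planarity is immediate because contracting a side of a cut in a plane graph keeps the graph plane. A secondary point worth a sentence is why the cut edges get all six colors: this is exactly because $v_A$ has degree six and the coloring is proper, so its six incident edges are rainbow-colored — this is the mechanism that makes the color-permutation step work. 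Once these routine verifications are in place, the contradiction is immediate, completing the proof.
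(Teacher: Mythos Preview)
Your proposal is correct and follows essentially the same argument as the paper: reduce a hypothetical non-trivial odd $6$-cut by contracting each side to a single vertex, invoke minimality to $6$-edge-color the two smaller graphs, and glue the colorings along the rainbow-colored cut edges after a color permutation. The paper's version is terser and skips the routine verifications (planarity, that odd cuts of the contracted graph lift to odd cuts of $G$, that the new vertex has degree six) that you spell out, but the substance is identical.
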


\begin{proof}
Since $G$ is $6$-regular, every cut in $G$ has even size. Hence, if $G$ has a non-trivial
odd cut of size less than eight, its size must be six.
Let $A$ and $B$ be the sides of such a non-trivial odd cut.

Let $G_A$ be the (plane) graph obtained from $G$ by replacing $A$ with a single vertex
incident with the six edges of the cut $(A,B)$. Similarly, $G_B$ is the graph obtained
from $G$ by replacing $B$ with a single vertex incident with the six edges of the cut $(A,B)$.
By the minimality of $G$, both $G_A$ and $G_B$ have $6$-edge-colorings. These edge-colorings
combine to a $6$-edge-coloring of $G$ (the edges of the cut receive six distinct colors)
which contradicts that $G$ is a minimal counterexample.
\end{proof}

Using Lemma~\ref{lm-odd-cut},
we prove the following simple observation on the structure of multigons in a minimal counterexample.

\begin{lemma}
\label{lm-multigon}
In a minimal counterexample $G$,
the order of every multigon is at most four and
the sum of the orders of any two incident multigons is at most five.
\end{lemma}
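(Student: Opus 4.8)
The plan is to derive each part from a contradiction with Lemma~\ref{lm-odd-cut} by exhibiting a non-trivial odd cut of size six. For the first claim, suppose $G$ contains a multigon $t$ of order at least five, so there are vertices $u,v$ joined by at least five parallel edges. Let $A=\{u,v\}$ and $B=V(G)\setminus A$. Since $G$ is $6$-regular, the number of edges leaving $A$ is $12-2e(u,v)$, where $e(u,v)\ge 5$ is the number of $uv$-edges; hence the cut $(A,B)$ has size at most $2$. If the cut has size $0$ then $G$ is itself a multigon, which is not $6$-regular unless it has exactly $3$ edges, contradicting the no-small-odd-cut hypothesis (the empty cut on an odd number of vertices); in any case a cut of size $0$ or $2$ that is odd contradicts the hypothesis of Theorem~\ref{thm-edge-coloring} that every odd cut has size at least six, and if it is even we still get that $G$ has at most two other vertices, a situation easily excluded by direct inspection or by noting $G$ would have a $6$-edge-coloring. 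So $e(u,v)\le 4$ for every pair, i.e., every multigon has order at most four.

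For the second claim, suppose $t_1$ and $t_2$ are incident multigons whose orders sum to at least six. Incident means they share a vertex; write the shared vertex as $v$, let $t_1$ join $v$ to $u_1$ with $a$ parallel edges and $t_2$ join $v$ to $u_2$ with $b$ parallel edges, where $a+b\ge 6$, $a,b\le 4$, and $u_1\ne u_2$ (if $u_1=u_2$ then $v$ has at least $6$ edges to $u_1$, already excluded, or we are back in the first case). Since $v$ has degree $6$ and $a+b\ge 6$, in fact $a+b=6$ and $v$ is incident only with the edges of $t_1$ and $t_2$. Now take $A=\{v,u_1,u_2\}$ or a suitable subset. The edges leaving $\{v\}$ are exactly the $a$ edges to $u_1$ and the $b$ edges to $u_2$; consider instead $A=\{v\}\cup\{u_1\}$: its boundary consists of the $b$ edges $vu_2$ plus the $6-a$ non-$t_1$ edges at $u_1$, for a total of $b+(6-a)=6-a+b$. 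By symmetry $A=\{v,u_2\}$ gives a cut of size $6-b+a$. Since $a+b=6$, these sizes are $2b$ and $2a$ respectively; one of $a,b$ is at most $3$, giving a cut of size at most $6$, and since $a,b\ge 2$ it is at least $4$. A cut of size $4$ would be an odd cut of size less than six, contradicting the hypothesis directly; a cut of size $6$ is a non-trivial odd cut of size six (the side has $2$ vertices, hence is non-trivial, and has odd — in fact we must check parity, but all cuts have even size and the relevant $T=V(G)$ so we use the odd-cut/odd-side condition), contradicting Lemma~\ref{lm-odd-cut}. In every case we reach a contradiction, so the orders of any two incident multigons sum to at most five.

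The main obstacle I anticipate is bookkeeping the parity conditions precisely: one must confirm that the small cuts produced actually are \emph{odd} cuts in the sense used (a side of odd size, equivalently $|T\cap A|$ odd with $T=V(G)$), rather than merely small cuts, so that Lemma~\ref{lm-odd-cut} and the hypothesis of Theorem~\ref{thm-edge-coloring} apply. With $|A|=2$ the side has even size, so a size-$6$ cut with a $2$-element side is \emph{not} automatically an odd cut; the cleaner route is to instead bound $e(u,v)$ and incident orders using the size-$6$ hypothesis applied to the \emph{trivial} cut at a cleverly chosen vertex after a small contraction, or to invoke minimality directly: contracting one side of such a small cut yields a smaller graph that is $6$-edge-colorable, and these colorings recombine. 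I would phrase the final write-up around that minimality/recombination argument (mirroring the proof of Lemma~\ref{lm-odd-cut}) rather than around raw cut-size counting, since it sidesteps the parity subtleties entirely while still being short.
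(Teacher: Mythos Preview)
Your proposal has a genuine gap, and you correctly identify it yourself: the sets $A=\{u,v\}$ and $A=\{v,u_i\}$ you use all have even cardinality, so the resulting cuts are \emph{not} odd cuts and neither the hypothesis of Theorem~\ref{thm-edge-coloring} nor Lemma~\ref{lm-odd-cut} applies to them. Your suggested repair via minimality and recombination (as in the proof of Lemma~\ref{lm-odd-cut}) does not work either: that argument requires a cut of size exactly six so that contracting each side yields a $6$-regular graph, but the two-vertex cuts you produce have size $12-2e(u,v)\le 2$, so contraction yields a vertex of degree at most two and minimality cannot be invoked.

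The paper's fix is simpler than anything you propose and sidesteps the parity issue directly. In both cases (a multigon of order $\ge 5$, or two incident multigons whose orders sum to $\ge 6$) one finds a vertex $v$ with exactly two neighbours $v'$ and $v''$; now take $A=\{v,v',v''\}$. This set has \emph{odd} size, so $(A,B)$ is an odd cut, and a degree count gives its size as $18-2\bigl(e(v,v')+e(v,v'')+e(v',v'')\bigr)=6-2e(v',v'')\le 6$. By the standing hypothesis the size is then exactly six, and if $|V(G)|>4$ the cut is non-trivial, contradicting Lemma~\ref{lm-odd-cut}. The residual case $|V(G)|\le 4$ is disposed of by direct inspection. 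The missing idea in your attempt is simply to use a three-element side rather than a two-element one.
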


\begin{proof}
If $G$ contains a multigon of order five or
two incident multigons with orders summing to six,
then there is a vertex $v$ that has only two neighbors,
say $v'$ and $v''$. Unless $G$ has exactly four vertices,
the set $\{v,v',v''\}$ forms a side of a non-trivial odd cut of size six
which is impossible by Lemma~\ref{lm-odd-cut}.
Hence, $G$ has exactly four vertices and
it is straightforward to show that it can be $6$-edge-colored.
\end{proof}

A possible way to obtain an edge-coloring of a minimal counterexample
is reducing a minimal counterexample
to another $6$-regular graph of the same order but with multigons of larger orders.
An operation that will achieve this is the swapping operation that we now introduce;
this operation can only be performed
if the resulting graph has no odd cuts of size less than six.

If $G$ is a plane graph such that it is possible to draw a closed curve in the plane that
intersects $G$ only at vertices $v_1,\ldots,v_k$ for $k$ even and
$G$ contains edges $v_2v_3$, $v_4v_5$, $\ldots$, $v_{k-2,k-1}$ and $v_kv_1$ (such
a $k$-tuple of vertices $v_1,\ldots,v_k$ is called {\em eligible}),
then the graph obtained from $G$ by the {\em $v_1\ldots v_k$-swap}
is the plane graph obtained by removing the edges $v_2v_3$, $v_4v_5$, $\ldots$, $v_{k-2,k-1}$ and $v_kv_1$ and
inserting the edges $v_{2i-1}v_{2i}$ for $i=1,\ldots,k/2$.

A crucial property of this operation is that the size of odd cuts can decrease
by at most two if $k$ is four or six. We prove this in the next lemma.

\begin{lemma}
\label{lm-swap}
Let $G$ be a minimal counterexample and let $k$ be either four or six.
Any graph $G'$ obtained from $G$ by the $v_1v_2\cdots v_k$-swap for eligible vertices $v_1,\ldots,v_k$
is $6$-regular and has no odd cut of size less than six.
\end{lemma}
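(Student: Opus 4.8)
The plan is as follows. That $G'$ is $6$-regular is immediate: the $v_1\ldots v_k$-swap removes $k/2$ edges and inserts $k/2$ edges, and each vertex $v_i$ that loses an incident edge gains exactly one, while all other vertices are untouched; so every degree is preserved. (A small caveat to check: the inserted edges $v_{2i-1}v_{2i}$ must not already be present in a way that changes the count — but since the curve meets $G$ only at $v_1,\dots,v_k$ and the $v_{2i-1}v_{2i}$ are drawn along the curve, this just possibly creates new bigons, which is allowed.) So the content of the lemma is the statement about odd cuts.

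For the odd-cut claim, suppose for contradiction that $G'$ has an odd cut $(A,B)$ of size less than six; since $G'$ is $6$-regular this size is even, hence at most four, and since it is an odd cut and $G'$ has even order (inherited from $G$, or re-derived as in the remark after Theorem~\ref{thm-edge-coloring}) it is non-trivial if it were trivial its size would be six. The key observation is that $G$ and $G'$ have the same vertex set, so $(A,B)$ is also a cut of $G$, and it is a cut of size at least six in $G$ by hypothesis (it is odd, and non-trivial odd cuts have size at least eight by Lemma~\ref{lm-odd-cut}, but we only need ``at least six''). The swap replaces the $k/2$ edges $v_2v_3, v_4v_5,\dots,v_kv_1$ by the $k/2$ edges $v_1v_2, v_3v_4,\dots,v_{k-1}v_k$. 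An edge leaves the cut only if it had one end in $A$ and one in $B$, and an edge enters the cut only in the same situation; thus to decrease the cut size by more than two we would need at least two of the removed edges to cross $(A,B)$ while fewer of the corresponding inserted edges do. Here is where planarity enters: the vertices $v_1,\dots,v_k$ lie on a closed curve $\gamma$ in cyclic order, and the edges being swapped all run along $\gamma$. For $k=4$ there is only one edge removed that could cross, namely at most we remove $v_2v_3$ and $v_4v_1$; a parity/interval argument on how the cyclic sequence $v_1,v_2,v_3,v_4$ is partitioned between $A$ and $B$ shows that the number of ``old'' crossing edges and the number of ``new'' crossing edges differ by at most two. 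For $k=6$ one argues the same way with the six points $v_1,\dots,v_6$ in cyclic order: among the three pairs $\{v_2,v_3\},\{v_4,v_5\},\{v_6,v_1\}$ (old) and $\{v_1,v_2\},\{v_3,v_4\},\{v_5,v_6\}$ (new), the difference between the count of $A$–$B$ pairs in the two families is at most two, because consecutive elements around the cycle change side at most... and the sides alternate along $\gamma$ in a controlled way. Hence the size of $(A,B)$ in $G'$ is at least $6-2=4$; to get the sharper ``at least six'' we use Lemma~\ref{lm-odd-cut}: the cut $(A,B)$ is non-trivial and odd in $G$, so its $G$-size is at least eight, and therefore its $G'$-size is at least $8-2=6$, a contradiction with our assumption. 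This completes the argument.

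The main obstacle, and the step that deserves the careful bookkeeping, is the planar/cyclic counting showing the cut size drops by at most two. The cleanest way I would organize it: fix the cut $(A,B)$ and look at the cyclic sequence of labels $\ell(v_1)\ell(v_2)\cdots\ell(v_k)\in\{A,B\}^k$ read around $\gamma$. The number of old crossing edges is the number of indices $i$ among the ``old'' pairs with $\ell$ differing across the pair, likewise for new; since the old and new pairings are the two perfect matchings of the cycle $C_k$ on $v_1,\dots,v_k$ into consecutive pairs, and a $\{0,1\}$-labelling of $C_k$ has its number of ``bichromatic consecutive pairs'' equal to an even number $2r$ (the number of colour-changes), each of the two matchings picks up between $r$-ish of these, and a direct case check for $k=4$ and $k=6$ (there are only a few labellings up to symmetry) shows $|(\text{old crossings}) - (\text{new crossings})| \le 2$. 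I would also note the degenerate possibility that some $v_i$ coincide or that some swapped edge is already present as a parallel edge — these only add equal numbers of parallel edges to both families and do not affect the bound. Everything else (regularity, evenness of $|V|$, reduction to size $\le 4$, application of Lemma~\ref{lm-odd-cut}) is routine.
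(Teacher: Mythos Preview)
Your proposal is correct and follows essentially the same approach as the paper: both show that the swap changes the size of any cut by at most two, by analysing how the cyclic sequence $v_1,\dots,v_k$ is split between the two sides. The paper phrases this via the set $A'$ of vertices $v_i\in A$ having a cyclic neighbour also in $A$ and checks the three possibilities ($A'$ empty, two consecutive, or three consecutive); your ``two perfect matchings of $C_k$'' framing with a case check on the $\{A,B\}$-labelling is the same computation in different clothing. One point worth noting: you make the appeal to Lemma~\ref{lm-odd-cut} explicit (a non-trivial odd cut has size $\ge 8$ in $G$, hence $\ge 6$ in $G'$), whereas the paper's write-up leaves that step implicit; your version is arguably cleaner there. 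Conversely, your remark that ``here is where planarity enters'' is slightly misleading: planarity is needed to define the swap and keep $G'$ plane, but the bound $|\text{old crossings}-\text{new crossings}|\le 2$ is a purely combinatorial fact about $\{A,B\}$-labellings of $C_k$ for $k\in\{4,6\}$, as your own case check confirms.
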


\begin{proof}
Clearly, the graph $G'$ is $6$-regular.
Consider a non-trivial odd cut with sides $A$ and $B$ of $G'$.
Observe that this cut is also odd in $G$.
By symmetry, we can assume that $|A\cap\{v_1,\ldots,v_k\}|\le 3$.
Let $A'\subseteq A$ be those vertices $v_i$ of $A$ such that $v_{i-1}$ or $v_{i+1}$ is in $A$.
Since $|A\cap\{v_1,\ldots,v_k\}|\le 3$, the set $A'$ is either empty or
formed by two or three consecutive vertices.
If $A'$ is empty or formed by three consecutive vertices,
the number of edges leaving $A'$ to $B$ is the same in $G$ and $G'$.
If $A$ is formed by two consecutive vertices,
then the number of such edges leaving $A'$ to $B$ is either increased or decreased by two.
Since the number of edges between the vertices of $A\setminus A'$ and the vertices of $B$ is preserved,
the size of the cut with sides $A$ and $B$ in $G'$ differ by at most two from its size in $G$.
Since $G$ has no non-trivial odd cuts of size less than eight by Lemma~\ref{lm-odd-cut},
the lemma follows.
\end{proof}

\subsection{Existence of $e$-colorings}

A crucial property of a minimal counterexample
is the existence of an $e$-coloring. Let us define this notion formally.
If $G$ is a $6$-regular graph and $e$ an edge of $G$, then an {\em $e$-coloring} of
$G$ is a coloring of edges of $G$ with six colors such that every edge except for $e$
is assigned one color, $e$ is assigned three colors, and
for every color, each vertex is incident with an odd number of edges
assigned that color. Observe that in an $e$-coloring, every vertex except the end-vertices of $e$
must be incident with edges of pairwise distinct colors, i.e., an $e$-coloring
is proper at all vertices except for the end vertices of $e$.

The following lemma appears as Lemma 2.5 in~\cite{bib-guenin}.

\begin{lemma}
\label{lm-e-coloring}
Let $G$ be a minimal counterexample. For every edge $e$, there exists an $e$-coloring.
\end{lemma}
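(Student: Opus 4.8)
The plan is to use the minimality of $G$ by passing to a strictly smaller instance. Write $e=uv$ and let $m$ be the number of edges of $G$ joining $u$ and $v$ (so $m\le 4$ by Lemma~\ref{lm-multigon}); I describe the main case $m=1$, the cases $2\le m\le 4$ needing only minor adjustments. Form a plane graph $G'$ by deleting $u$ and $v$ together with the edge $e$, and then reconnecting the $10$ dangling ends (the $5$ edges of $G$ at $u$ other than $e$ and the $5$ at $v$ other than $e$): these ends appear in a cyclic order around the region of the drawing vacated by $u$, $v$ and $e$, and I add, inside that region, a non-crossing perfect matching of them as new edges. There are Catalan-many such matchings, and I would choose one that produces no loop and leaves $G'$ with no odd cut of size less than six. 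Whatever the choice, $G'$ is $6$-regular, plane, and has $|V(G)|-2$ vertices; so, once the cut condition is secured, minimality provides a $6$-edge-coloring $\varphi'$ of $G'$.

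From $\varphi'$ I read off an $e$-coloring of $G$. An edge of $G$ disjoint from $\{u,v\}$ keeps its color; an edge of $G$ incident with $u$ or $v$ other than $e$ is half of some new edge $h$ of $G'$, and it inherits the color $\varphi'(h)$. Every edge of $G$ except $e$ now has exactly one color, and at every vertex other than $u$ and $v$ the coloring is proper, since the incident edges correspond bijectively (color-preservingly) to those of $G'$. Let $M_u$ and $M_v$ be the multisets of colors now on the five edges at $u$, respectively at $v$, other than $e$; a new edge of $G'$ joining $u$'s side to $v$'s side contributes its color once to each of $M_u,M_v$, while a new edge inside one side contributes an even number of times there, so $M_u$ and $M_v$ have the same odd-multiplicity colors. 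Color $e$ with the set of colors that occur an even number of times in $M_u$ (equivalently in $M_v$), including the colors missing from $M_u$. A short parity check shows this is an $e$-coloring: a color of even multiplicity in $M_u$ gets one further incidence from $e$, a color of odd multiplicity gets none, a missing color gets one, so at $u$ and at $v$ every color has odd multiplicity; and the number of colors on $e$ is odd and positive. The construction delivers an $e$-coloring unless $e$ ends up with fewer than three colors, which happens only when the five colors of $M_u$ are pairwise distinct --- the ``rainbow'' case.

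Two steps carry the work. The first is to guarantee that $G'$ has no odd cut of size less than six (hence, being a smaller $6$-regular plane instance satisfying the hypotheses of Theorem~\ref{thm-edge-coloring}, it is $6$-edge-colorable by minimality of $G$). Trivial odd cuts of $G'$ have size six; a non-trivial odd cut $(A,B)$ of $G'$ corresponds, after reinserting $u$ and $v$ on the appropriate sides, to an odd cut of $G$ whose size differs from that of $(A,B)$ only by the edges incident with $u$ or $v$ and the new edges crossing $(A,B)$. Since every non-trivial odd cut of $G$ has size at least eight (Lemma~\ref{lm-odd-cut}) and incident multigons are bounded by Lemma~\ref{lm-multigon}, one can do a short case analysis on how many dangling ends of $u$ and of $v$ lie in $A$ and, exploiting the freedom in the non-crossing matching, keep the size of $(A,B)$ at least six; the same analysis lets one avoid loops and keep $G'$ connected (the genuinely degenerate configurations, where $e$ sits in a large multigon or on a triangular face so that the matching is nearly forced, are handled separately via Lemmas~\ref{lm-odd-cut} and~\ref{lm-multigon}, or by first applying a swap as in Lemma~\ref{lm-swap}). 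The second step is the rainbow case: pick two of the new edges, of colors $\ca$ and $\cb$, and look at the $\ca\cb$-chain of $\varphi'$ through the first of them; if it avoids the second, swapping it leaves a proper $6$-edge-coloring of $G'$ in which two new edges share the color $\cb$, so $M_u$ is no longer rainbow and the previous paragraph applies. I expect the main obstacle to be the remaining possibility: that for every choice of $\varphi'$ and of the matching, every bichromatic chain joining two new edges contains both of them, so no such swap exists. Ruling this out should require a planarity-and-minimality argument showing that so rigid a chain structure would itself produce an odd cut of size less than eight in $G$, or force $G$ to be small enough to color directly.
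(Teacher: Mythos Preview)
The paper does not prove this lemma; it simply cites Guenin (Lemma~2.5 of~\cite{bib-guenin}). So your proposal is a standalone attempt, and it has one unnecessary complication and one genuine gap.

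The complication is the rainbow case. If $M_u$ consists of five pairwise distinct colors, then assigning $e$ the sixth color yields a \emph{proper} $6$-edge-coloring of $G$: it is proper at $u$ and $v$ by construction, and at every other vertex $w$ the multiset of colors coincides with that at $w$ in the properly colored $G'$ (your no-loop assumption makes the correspondence between edges at $w$ in $G$ and in $G'$ a color-preserving bijection). Since $G$ is a counterexample, the rainbow case is impossible. Your Kempe-chain detour---which you yourself flag as the ``main obstacle''---is therefore unnecessary.

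The genuine gap is your first step. You assert that among the non-crossing matchings of the ten dangling ends one can always find one that avoids loops and leaves $G'$ with no odd cut of size below six, calling this ``a short case analysis''. You have not carried it out, and it is not short. For an odd cut $(A,B)$ of $G'$, let $X$ be the set of dangling ends whose other endpoint lies in $B$, and let $E_0$ be the number of $A$--$B$ edges of $G$ avoiding $\{u,v\}$; then the cut has size $E_0$ plus the number of matching edges with exactly one end in $X$. From Lemma~\ref{lm-odd-cut} you only get $E_0\ge\max(8-|X|,\,|X|-2)$, and when $|X|=5$ with the ends of $X$ consecutive on the boundary circle, some non-crossing matchings contribute just one crossing edge, leaving a cut of size four in $G'$. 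Of course one can fix any \emph{single} cut by choosing the matching differently, but you need one matching that works for \emph{all} odd cuts simultaneously, and different cuts can pull in opposite directions; the same hand-wave is covering loop avoidance and connectivity as well. Establishing the existence of a suitable matching requires a real structural argument---something like an uncrossing/laminar analysis of the near-tight odd cuts of $G$ meeting $\{u,v\}$ and how their traces sit on the boundary circle---and without it the proof is incomplete.
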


We now strengthen Lemma~\ref{lm-e-coloring} for the case
when $e$ is contained in a multigon of order at least three.

\begin{lemma}
\label{lm-e-trigon}
Let $G$ be a minimal counterexample, $e=vv'$ an edge of $G$ contained in a multigon, and
$w$ and $w'$ neighbors of $v$ and $v'$, respectively, such that $vv'w'w$ is eligible.
Suppose that either $e$ is contained in a multigon of order at least three or
$e$ is contained in a bigon and neither $vv'$ nor $ww'$ is contained in a multigon of order at least three.

There exists an $e$-coloring such that $e$ is assigned precisely three colors, say $\ca$, $\cb$ and $\cc$, and
one of these colors, say $\ca$, is assigned to two other edges incident with $v$ including $vw$ as well as
to two other edges incident with $v'$ including $v'w'$.
Each of the vertices $v$ and $v'$ is incident with a single edge of each color except for the color $\ca$.
Moreover, if $e$ is contained in a bigon, the other edge of the bigon is colored with $\cd$, and
if $e$ is contained in a multigon of order three or more, two of the other edges of the multigon are colored with $\cd$ and $\ce$.
\end{lemma}

\begin{proof}
Let $G'$ be the graph resulting by the $vv'w'w$-swap.
By the minimality of $G$, $G'$ has a $6$-edge-coloring (this follows from the assumption that
either $e$ is contained in a multigon of order at least three or
$e$ is contained in a bigon and neither $vv'$ nor $ww'$ is contained in a multigon of order at least three).
Let $\ca$ be the color of the new edge $ww'$.
Note that $\ca$ is not assigned to any edge forming the multigon containing $e$ (otherwise, $G$ has a $6$-edge-coloring).
Let $\cb$ be the color of $e$ and $\cc$ the color of the new edge $vv'$.
The desired coloring of $G$ is obtained
by removing the new edges $vv'$ and $ww'$,
inserting edges $vw$ and $v'w'$ colored with $\ca$,
assigning the colors $\ca$, $\cb$ and $\cc$ to $e$, and
permuting the colors $\cd$, $\ce$ and $\cf$ to satisfy the last statement of the lemma.
\end{proof}

The notion of $e$-colorings is related to the notion of mates which also appeared in~\cite{bib-guenin}.
Here, we use a slightly different but equivalent terminology to that in~\cite{bib-guenin}.
If $G$ is a minimal counterexample, $e$ is an edge of $G$ and
$c$ is one of the colors used in an $e$-coloring, then a {\em $c$-mate} $M_c$
is a set of edges of $G$ that form an non-trivial odd cut containing $e$ such
that, for every color $c'\not=c$,
$M_c$ contains exactly one edge that is assigned the color $c'$.
Lemma~2.6 in~\cite{bib-guenin} (note that our definition of a minimal counterexample and an $e$-coloring matches the setting in that Lemma~2.6 in~\cite{bib-guenin} is proven) asserts the existence of mates in a minimal counterexample.

\begin{lemma}
\label{lm-mate}
Let $G$ be a minimal counterexample and $e$ an edge of $G$.
For every $e$-coloring and every color $c$, there exists a $c$-mate.
\end{lemma}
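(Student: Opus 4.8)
The plan is to exploit the tightness of the odd-cut bound together with the slack built into the definition of an $e$-coloring. Fix an $e$-coloring with $e=vv'$; by definition every vertex other than $v,v'$ sees each of the six colors exactly once, while at $v$ and $v'$ the edge $e$ is assigned an odd number of colors, so among the five edges of $G$ incident with $v$ (other than $e$) the colors not assigned to $e$ each appear exactly once and each color assigned to $e$ is missing at $v$ (and likewise at $v'$). The idea, following Guenin, is that for a fixed color $c$ the edges coloured with colours $\neq c$ form, together with $e$, a subgraph in which every vertex has even degree except $v$ and $v'$; more precisely, delete from $G$ all edges assigned colour $c$ and consider the remaining graph $G_c$. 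At every vertex $w\neq v,v'$ the degree in $G_c$ is $6-1=5$ if $c$ was present at $w$, but we instead look at the parity: each of the five colours $\neq c$ appears an odd (namely one) number of times, so $w$ has odd degree in $G_c$; at $v$, the colour $c$ either is among the colours of $e$ (then $e\in G_c$ and $v$ has even degree) or is not (then $e\notin G_c$, and $v$ still has some parity to track). The clean way to set this up is to work in $G\setminus e$ and argue that the $c$-coloured edges form a ``near-perfect structure'' relative to the cut.

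Concretely, first I would form the auxiliary graph $H$ obtained from $G$ by adding, if $c$ is not one of the colours of $e$, a dummy edge (or by otherwise normalising so that $v$ and $v'$ each see colour $c$ an odd number of times); the point is to arrange that the set $S$ of edges coloured $c$ has the property that every vertex is incident with an odd number of them, i.e.\ $S$ is a $T$-join for $T=V(G)$ — equivalently $E(G)\setminus S$ together with a dummy edge at $vv'$ is an even subgraph, so $E(H)\setminus S$ decomposes into cycles. Then the complement $E(G)\setminus S$ of the $c$-class is a disjoint union of edges (at $v,v'$) and paths/cycles in the five remaining colours, and a $T$-cut argument shows that some odd cut meets $E(G)\setminus S$ in exactly one edge of each colour $\neq c$. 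The mechanism for producing that cut is the hypothesis that every odd cut has size at least six (Lemma~\ref{lm-odd-cut} sharpens this, but six suffices): if $\delta(A)$ is any odd cut through $e$, it has $\ge 6$ edges and $\ge 5$ of them are coloured $\neq c$ (since a proper-away-from-$\{v,v'\}$ $e$-coloring puts at most $\lceil 6/2\rceil$ of a single colour across it — here one needs the parity/counting argument that an odd cut sees each colour an odd number of times at its ``inside'' contributions). Choosing $A$ to be an inclusion-minimal odd set containing $v$ but not $v'$ (or vice versa), minimality forces $|\delta(A)|=6$ and then the colour count pins down that $\delta(A)$ carries exactly one edge of each colour $\neq c$, giving the desired $c$-mate $M_c=\delta(A)$.

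The key steps, in order: (1) record the local colour profile at $v$, $v'$, and all other vertices in an $e$-coloring, and translate ``$e$ gets an odd set of colours'' into a global parity statement about each colour class; (2) show that for each colour $c$, the class of $c$-coloured edges is (after the dummy-edge normalisation at $e$) a join meeting every odd cut through $e$ in an odd number of edges, hence at least one, so every such cut has $\le 5$ edges of colours $\neq c$ available ``tightly''; (3) pick a minimal odd set $A$ separating the ends of $e$ and use Lemma~\ref{lm-odd-cut} — or rather the size-$\ge 6$ hypothesis applied to $\delta(A)$, noting a minimal such cut cannot be pushed below six — to conclude $|\delta(A)|=6$; (4) combine the parity count from (2) with $|\delta(A)|=6$ and the fact that $e\in\delta(A)$ to deduce $\delta(A)$ contains exactly one edge in each colour $c'\neq c$, i.e.\ it is a $c$-mate. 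The main obstacle I anticipate is step (2)–(3): proving that the cut $\delta(A)$ really does contain at least one $c'$-edge for every $c'\neq c$ simultaneously, rather than merely that its total size is six. This requires the parity observation that an odd cut intersects each colour class of an $e$-coloring in an odd number of edges — which in turn uses that $V(G)$-joins (perfect-matching-like structures across a cut) behave well mod $2$ — and then a counting squeeze: six edges, five colours each contributing an odd (hence $\ge 1$) number, plus the forced presence of $e$, leaves no room for any colour to be absent and forces the distribution $1+1+1+1+1$ among the colours $\neq c$ with $e$ absorbing the parity of colour $c$. Once that squeeze is in place the lemma follows immediately.
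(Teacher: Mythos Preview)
The paper does not supply its own proof of this lemma; it simply quotes it as Lemma~2.6 of Guenin's manuscript. So there is nothing in the paper to compare your argument against. That said, your proposal contains a genuine gap.

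The failure is in step~(4), the ``counting squeeze.'' You argue that an odd cut $\delta(A)$ of size six through $e$ must contain exactly one edge of each colour $c'\neq c$, because each colour class meets an odd cut in an odd number of edges and $6$ edges leave no slack. But $e$ carries at least three colours, so the total number of colour-incidences in $\delta(A)$ is not $6$ but $5+|S|\ge 8$ (five single-coloured edges plus the $|S|$ colours on $e$). Distributing $8$ as six odd summands forces one colour to appear three times; distributing $10$ (when $|S|=5$) can give $1+1+1+1+3+3$. In either case the cut is a $c$-mate only for the specific colour that happens to appear three (or five) times, and in the $3+3$ case it is a $c$-mate for no colour at all. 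Concretely, take $|S|=3$ and suppose the five edges at $v$ other than $e$ carry colours $\ca,\ca,\ca,\cb,\cc$; the trivial cut $\delta(\{v\})$ is then an $\ca$-mate but not a $\cb$-mate, so your construction cannot produce a $c$-mate for a prescribed $c$.

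The underlying problem is that your choice of cut (an inclusion-minimal odd side, i.e.\ the trivial cut $\{v\}$) does not depend on $c$, whereas the conclusion must. Guenin's argument is $c$-sensitive: rather than exhibiting the cut directly, one assumes no $c$-mate exists and derives a proper $6$-edge-colouring of $G$, contradicting minimality. The mechanism uses that ``no $c$-mate'' means every odd cut through $e$ has $\sum_{c'\neq c}|J_{c'}\cap\delta(A)|\ge 7$, which translates into an odd-cut lower bound in an auxiliary graph built from the five colour classes $J_{c'}$, $c'\neq c$; one then invokes the already-established $k=5$ case of the theorem to $5$-edge-colour that auxiliary graph and reassemble a $6$-edge-colouring of $G$. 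Your parity observation in step~(2) is correct and is indeed an ingredient, but it must be fed into this indirect argument rather than a direct minimal-cut selection.
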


The following observation on the structure of mates is often used in our arguments.
We state it as a proposition for future reference.

\begin{proposition}
\label{prop-mate}
Let $G$ be a minimal counterexample and $e$ an edge of $G$.
In each $e$-coloring,
every $c$-mate $M_c$ contains at least five edges (possibly including $e$)
assigned the color $c$.
\end{proposition}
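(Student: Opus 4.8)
The plan is to argue by contradiction: suppose some non-trivial $c$-mate $M_c$ contains at most four edges assigned the color $c$, i.e.\ at most three $c$-colored edges besides $e$ (recall $e$ may or may not carry the color $c$). The overall strategy is the same as in the parity argument for $c$-mates in~\cite{bib-guenin}: a $c$-mate is an odd cut $(A,B)$ with $e$ in the cut, and by Lemma~\ref{lm-odd-cut} every non-trivial odd cut of a minimal counterexample has size at least eight. So $|M_c|\ge 8$. Since $M_c$ meets each color class $c'\ne c$ in exactly one edge, there are exactly five edges of $M_c$ with colors different from $c$ (one for each of the other five colors). Hence $|M_c|\le 5 + (\text{number of }c\text{-colored edges in }M_c)$, and combining with $|M_c|\ge 8$ gives at least three $c$-colored edges in $M_c$. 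To push this from three to five is the point of the proposition, and this is where the parity of the cut enters.

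The key step is a parity count of $c$-colored edges across the cut $(A,B)$. Fix one side $A$ with $|A|$ odd. In an $e$-coloring every vertex is incident with an odd number of $c$-colored edges (this is part of the definition of an $e$-coloring, valid even at the end-vertices of $e$). Summing this over all vertices of $A$: the sum of the $c$-degrees over $A$ is a sum of $|A|$ odd numbers, hence odd since $|A|$ is odd. On the other hand that sum counts each $c$-colored edge inside $A$ twice and each $c$-colored edge of the cut once. Therefore the number of $c$-colored edges in the cut $(A,B)$ — that is, the number of $c$-colored edges of $M_c$ — is odd. Since we already know it is at least three, and it is odd, if it were not $\ge 5$ it would equal exactly three; the remaining work is to exclude the value three.

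To rule out exactly three $c$-colored edges in $M_c$, note that in that case $|M_c| = 5 + 3 = 8$, so $(A,B)$ is an odd cut of size exactly eight containing $e$, with exactly one edge of each color $c'\ne c$ and exactly three edges of color $c$. I would now try to recolor: consider the union of the $c$-colored edges of $M_c$ and replace the coloring restricted to this eight-edge cut by a rearrangement that uses each of the six colors, together with a matching modification on the two sides $G_A$, $G_B$ obtained by contracting $B$ (resp.\ $A$) to a single vertex — exactly the contraction used in Lemma~\ref{lm-odd-cut} and Lemma~\ref{lm-e-coloring}. The subtlety, and the main obstacle, is that $e$ lies in the cut and carries several colors, so one must track how the three-colors-on-$e$ structure of the $e$-coloring interacts with the $c$-mate; the cleanest route is to invoke Lemma~\ref{lm-mate} in the two smaller graphs $G_A$ and $G_B$ (which by minimality admit $e$-colorings and hence $c$-mates) and glue, obtaining either a $6$-edge-coloring of $G$ or an $e$-coloring with $e$ receiving only one color — either way contradicting that $G$ is a minimal counterexample. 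Verifying that the gluing goes through when the $c$-part of the cut has size three (as opposed to size one, the trivial-mate case) is the one genuinely delicate point, but the parity argument above has already narrowed the bad case to a single value, which keeps the case analysis short.
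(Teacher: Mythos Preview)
Your count in the second paragraph is where the argument goes wrong, and fixing it makes everything after that paragraph unnecessary. You write that ``there are exactly five edges of $M_c$ with colors different from $c$'', but this forgets that the edge $e$ is assigned at least three colors. For each color $c'\neq c$ the mate $M_c$ contains exactly one edge assigned $c'$, and whenever $c'$ is one of the colors on $e$, that edge is $e$ itself. So $e$ alone fills at least two of the five slots (at least three if $c$ is not among $e$'s colors), and the number of \emph{distinct} edges of $M_c$ carrying some color other than $c$ is at most $1+3=4$ (at most $1+2=3$ if $c$ is not on $e$). Combining with $|M_c|\ge 8$ from Lemma~\ref{lm-odd-cut} gives at least four further edges of color $c$ in the first case (plus $e$, which also carries $c$), and at least five in the second. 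Either way you get five $c$-colored edges immediately. This is exactly the paper's two-line proof.

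Your parity argument is correct but superfluous once the count is done properly, and the final recoloring/gluing step is both unnecessary and not actually carried out: you appeal to Lemma~\ref{lm-mate} in $G_A$ and $G_B$, but that lemma is stated for a minimal counterexample, which $G_A$ and $G_B$ are not (they are $6$-edge-colorable by minimality), so the invocation does not make sense as written.
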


\begin{proof}
By Lemma~\ref{lm-odd-cut}, the mate $M_c$ contains at least eight edges.
Since the edge $e$ is assigned at least three colors and $M_c$ contains exactly
one edge assigned each of the colors $c'\not=c$, $M_c$ must include at least five edges
assigned the color $c$.
\end{proof}

Let us demonstrate the use of Proposition~\ref{prop-mate} in the following lemma.

\begin{lemma}
\label{lm-face-quadragon}
In a minimal counterexample $G$,
every face $f$ adjacent to a quadragon $q$ is adjacent to at least five $\ge 4$-faces that are not $f$-incident with $q$.
In particular, $f$ is $\ge 5$-big $\ge 8$-face.
\end{lemma}

\begin{proof}
Let $e=vv'$ be the edge of the quadragon incident with $f$ and $wvv'w'$ a facial walk of $f$.
Consider an $e$-coloring as described in Lemma~\ref{lm-e-trigon}.
Let $M_c$ be a $c$-mate for $c\not=\ca$ and $e_c$ another edge of $f$ contained in $M_c$.
Let $f_c$ be the other face containing the edge $e_c$.
Since the edges of the quadragon are assigned all six colors and
the mate $M_c$ contains at least five edges with the color $c$ by Proposition~\ref{prop-mate},
the face $f_c$ contains another edge with the color $c$.
Hence, $f_c$ is $\ge 4$-face since no two edges with the color $c$ are incident.
Since the edge $e_c$ is not contained in a multigon and $G$ has no multigons of order five by Lemma~\ref{lm-multigon},
the faces $f_c$ differ for different choices of $c$.
We conclude that $f$ is $\ge 5$-big $\ge 8$-face.
\end{proof}

Another application of the mates is the following.

\begin{lemma}
\label{lm-face-trigon}
In a minimal counterexample,
every face $f$ adjacent to a trigon $t$ is incident with at least one edge that is not contained in a multigon and that is not $f$-incident with $t$.
\end{lemma}

\begin{proof}
Let $e=vv'$ be an edge of $t$ and $wvv'w'$ a facial walk of $f$.
Consider an $e$-coloring as described in Lemma~\ref{lm-e-trigon}.
Let $M_\cf$ be a $\cf$-mate. Since the edges of the trigon have all the five colors different from $\cf$,
all the other edges contained in $M_\cf$ have the color $\cf$.
Consequently, the edge of $f$ contained in $M_\cf$ and not in the trigon
is not contained in a multigon and it is also not $f$-incident with $t$.
\end{proof}

In the rest of this section, we will focus in more detail on multigons
adjacent to faces of various sizes.

\subsection{Structure of $3$-faces}

In this subsection, we focus on $3$-faces.
We start with $3$-faces adjacent to a trigon.

\begin{lemma}
\label{lm-3-face-trigon}
If a $3$-face $f$ of a minimal counterexample is adjacent to a trigon,
then $f$ is adjacent to no other multigon, both the other faces adjacent to $f$ are $\ge 5$-big and
the other face adjacent to the trigon is also $\ge 5$-big.
\end{lemma}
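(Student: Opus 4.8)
The plan is to install the $e$-coloring of Lemma~\ref{lm-e-trigon} on an edge of the trigon lying on $f$, normalise the colours, and then read off all three conclusions from the structure of mates, as in Lemmas~\ref{lm-face-trigon} and~\ref{lm-face-quadragon}. Write the trigon $T$ as the three parallel edges $e_1,e_2,e_3$ between vertices $u,w$, with $e_1$ on $\partial f$, and let $B_1$ (bounded by $e_1,e_2$) and $B_2$ (bounded by $e_2,e_3$) be its bigons; since $f$ is a $3$-face with a loopless simple boundary, its remaining edges are $ux,wx$ for a third vertex $x$. Let $f_u,f_w$ be the faces on the other sides of $ux,wx$, and $f_3$ the face on the other side of $e_3$. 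Applying Lemma~\ref{lm-e-trigon} to $e=e_1$ and arguing as in Lemma~\ref{lm-face-trigon}, no edge of $T$ carries a colour of $\{\ca,\cb,\cc\}$, so $e_2,e_3$ carry two colours of $\{\cd,\ce,\cf\}$; after permuting colours, $e_2$ is $\cd$ and $e_3$ is $\ce$, so $\cf$ is absent from $T$. Counting colours at $u$ and at $w$, the edges $ux,wx$ each get a colour of $\{\ca,\cf\}$, and since the colouring is proper at $x$ they get different ones; using the $u\leftrightarrow w$ symmetry, assume $ux$ is coloured $\ca$ and $wx$ is coloured $\cf$. Finally, for $c\ne\ca$ the $c$-mate provided by Lemma~\ref{lm-mate} is non-trivial (the only trivial cuts through $e_1$ are the stars of $u$ and of $w$, and in this colouring each of them carries three edges of colour $\ca$ and so can only be an $\ca$-mate), so by Proposition~\ref{prop-mate} it has at least five edges of colour $c$; it contains $e_1,e_2,e_3$, and its edges outside $T$ are all coloured $c$ if $c=\cf$ and all coloured $c$ except one $\cf$-edge otherwise.

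For the multigon claim, first $wx$ lies in no multigon: $M_\cf$ meets $\partial f$ in a second edge besides $e_1$, which, being outside $T$, is coloured $\cf$ and hence equal to $wx$; and a $\cf$-coloured edge of $M_\cf$ outside $T$ that lay in a bigon would force its parallel partner, a non-$\cf$ edge of $M_\cf$, to be a $u$-$w$ edge coloured $\cf$, which does not exist — exactly the argument of Lemma~\ref{lm-face-trigon}. Next $ux$ lies in no multigon: by Lemma~\ref{lm-multigon} a multigon through $ux$ is incident with $T$ at $u$ and hence is a bigon $\{ux,ux'\}$, but then — since $e_1,e_2,e_3,wx$ are the only edges within $\{u,w,x\}$ and $G$ is loopless — the set $\{u,w,x\}$ forms one side of a non-trivial odd cut of size six, contradicting Lemma~\ref{lm-odd-cut} (the case $|V(G)|=4$ being excluded as in Lemma~\ref{lm-multigon}). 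So $T$ is the only multigon adjacent to $f$; in particular $f_u,f_w,f_3$ are not bigons.

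For the ``$\ge 5$-big'' assertions I imitate the proof of Lemma~\ref{lm-face-quadragon} three times. Take $f_w$. For each $c\ne\ca$, $M_c$ crosses $\partial f_w$; one crossing edge is $wx$ (it is the edge of $f$ in $M_c$ other than $e_1$, since $ux$ of colour $\ca$ lies in no $M_c$ with $c\ne\ca$), so there is a further crossing $e'\ne wx$, which — as $\partial f_w$ carries no edge of $T$ (by the cyclic order at $w$, where $e_1,e_2,e_3$ sit consecutively) — lies outside $T$; since $wx$ is the unique $\cf$-edge of $M_c$ when $c\ne\cf$, the edge $e'$ is coloured $c$ in every case. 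The other face of $e'$ then contains a second edge of $M_c$, again coloured $c$, and since the $c$-coloured edges form a perfect matching of $V(G)$ for every $c\ne\ca$, that face can be neither a $2$- nor a $3$-face; it is a $\ge 4$-face, and the distinct colours of the edges $e'$ make these five faces distinct, as in Lemma~\ref{lm-face-quadragon}. Hence $f_w$ is $\ge 5$-big. For $f_u$, first swap $\ca$ and $\cf$ along the $\ca\cf$-chain through $ux$; this produces another colouring of the special form of Lemma~\ref{lm-e-trigon} in which $ux$ is coloured $\cf$ (and $\cf$ is still absent from $T$), so the same argument gives $f_u$ $\ge 5$-big. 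For $f_3$ the argument is identical with $e_3$ in place of $wx$ — note that here too the unique $\cf$-edge of each $M_c$ ($c\ne\ca,\cf$) is $wx$, which does not lie on $\partial f_3$. Finally, $f_3$ is also adjacent to the bigon $B_2$, a $2$-face distinct from those five, so $f_3$ has at least six neighbours and is a $\ge 6$-face.

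The main work is the bookkeeping of the third paragraph: checking in each of the three cases that the ``other face'' really contains a second $M_c$-edge of colour $c$ (which comes down to the routine but case-heavy verification that this face carries no edge of $T$ and, for $f_3$, identifying the $\cf$-edge of $M_c$ as $wx$), that the resulting five faces are pairwise distinct (as in Lemma~\ref{lm-face-quadragon}), and that the $\ca\cf$-chain swap exposing $ux$ preserves every feature required of the colouring — in particular that $u$ and $w$ each retain two ``extra'' edges of colour $\ca$ and that $e_1$ keeps its three colours. I expect this last verification, together with the distinctness of the five faces, to be the only genuinely delicate point; everything else follows the template already used for Lemmas~\ref{lm-face-trigon} and~\ref{lm-face-quadragon}.
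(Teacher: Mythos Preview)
Your approach differs from the paper's at the decisive step. For the ``no other multigon'' claim your argument is fine (the paper's is a one-liner from the Lemma~\ref{lm-e-trigon} colouring, but yours via Lemma~\ref{lm-odd-cut} also works). The divergence is in the $\ge 5$-big assertions: the paper does \emph{not} work directly with the $e$-coloring of Lemma~\ref{lm-e-trigon}. Instead it performs a $v_1v_2v_3v_4$-swap (with $v_4$ the neighbour of $v_3=x$ on $f''$), obtains a proper $6$-colouring of the swapped graph, and reverse-engineers from it a specific $e$-coloring of $G$ in which \emph{both} $v_2v_3$ and $v_1v_4$ receive colour $\ca$. This forces every mate $M_c$, $c\ne\ca$, through $v_1v_3$ and hence through $f''$, which is what makes the argument for $f''$ go; the third face is handled by redoing the swap with $v_1,v_2$ interchanged.

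Your Kempe-chain substitute for this step does not work. In the Lemma~\ref{lm-e-trigon} colouring, each of $u,w$ carries (besides $e_1$) two $\ca$-edges and one $\cf$-edge, so in $G\setminus e_1$ both $u$ and $w$ have odd degree three in the $\ca\cf$-subgraph. The ``$\ca\cf$-chain through $ux$'' is therefore not a well-defined path; and swapping along any walk through $ux$ either breaks the odd-parity condition at $u$ (leaving an even number of $\ca$-edges there), or forces $wx$ to flip as well, which then unbalances $w$ unless a further edge at $w$ flips --- pushing the defect to a new endpoint. There is no Kempe move that makes $ux$ into $\cf$ while keeping two extra $\ca$-edges at each of $u,w$; the swap operation is genuinely needed here, not just a convenience. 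Without it, the mates $M_c$ for $c\ne\ca$ avoid $ux$ entirely and need not meet $\partial f_u$ at all, so your template yields nothing about $f_u$.

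There is a second, smaller gap in the $f_w$ and $f_3$ cases: you assert that the ``other face'' $f'$ of $e'$ carries a second $M_c$-edge of colour $c$, reducing this to ``$f'$ carries no trigon edge''. But $f'=f_3$ (respectively $f'=f_w$) is not excluded by cyclic-order considerations at $w$ alone, and then the second $M_c$-edge on $\partial f'$ is $e_3$, of colour $\ce\ne c$. This \emph{can} be repaired by a parity argument (the five edges $e_1,e_2,e_3,wx,e'$ would form a dual $5$-cycle, i.e.\ a bond of odd size in a $6$-regular graph), and the paper's own justification at the analogous point is comparably terse; but it is not the ``routine'' check you describe.
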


\begin{proof}
Let $e=v_1v_2$ be an edge of the trigon adjacent to $f$,
$v_3$ be the remaining vertex of the face $f$,
$f'$ the other face adjacent to the trigon,
$f''$ the face incident with the edge $v_1v_3$ and
$v_4$ a neighbor of $v_3$ on the face $f''$ (see Figure~\ref{fig-3-face-trigon}).
Consider an $e$-coloring as described in Lemma~\ref{lm-e-trigon} for the eligible sequence $v_1v_2v_3v_4$.

\begin{figure}
\begin{center}
\epsfbox{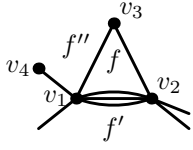}
\end{center}
\caption{Notation used in the proof of Lemma~\ref{lm-3-face-trigon}.}
\label{fig-3-face-trigon}
\end{figure}

Let $M_c$ be a $c$-mate for $c\not=\ca$.
Observe that $M_c$ contains the three edges of the trigon and the edge $v_1v_3$
which is colored with $\cf$ (it cannot contain the edge $v_2v_3$ because its color is $\ca$).
We show that both $f'$ and $f''$ are $\ge 5$-big.
Since $G$ is $2$-connected,
the faces $f'$ and $f''$ are different.
We now argue that $f'$ is $\ge 5$-big.
Let $e_c$ be the edge incident with $f'$ that is contained in $M_c$ and
that is not contained in the trigon. Clearly, the color of $e_c$ is $c$.
Let $f_c$ be the face adjacent to $e_c$ that is distinct from $f'$.
Since the mate $M_c$ contains at least five edges with the color $c$ by Proposition~\ref{prop-mate},
$f_c$ must be $\ge 4$-face.
Since the faces $f_c$ are different for different choices of $c\not=\ca$,
the face $f'$ is $\ge 5$-big.
The argument that $f''$ is $\ge 5$-big follows the same lines.

Switching the roles of $v_1$ and $v_2$ yields that
the face adjacent to $f$ distinct from $f''$ is also $\ge 5$-big.
\end{proof}

We now focus on $3$-faces adjacent to bigons.

\begin{lemma}
\label{lm-3-face-23-bigon}
If a $3$-face of a minimal counterexample is adjacent to at least two bigons,
then it is adjacent exactly to two bigons and the other faces adjacent to these
bigons are $\ge 5$-big.
\end{lemma}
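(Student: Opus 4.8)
\emph{Proof proposal.} The plan is to follow the template of the proof of Lemma~\ref{lm-3-face-trigon}. Write $f=v_1v_2v_3$. Since any two edges of a triangle share a vertex, if two of the edges of $f$ lie in bigons we may relabel the vertices so that these are $v_1v_2$ and $v_2v_3$; let $B_1$ be the bigon whose edges are $v_1v_2$ and $g_1$, let $B_2$ be the bigon whose edges are $v_2v_3$ and $g_2$, and let $F_1$, $F_2$ be the faces other than $B_1$, $B_2$ incident with $g_1$, $g_2$ respectively. The face on the other side of $v_1v_3$ is a bigon if and only if $v_1v_3$ lies in a multigon, so ``$f$ is adjacent to exactly two bigons'' is equivalent to ``$v_1v_3$ lies in no multigon''; thus the two things to prove are that $v_1v_3$ lies in no multigon, and that $F_1$ and $F_2$ are $\ge 5$-big.

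Suppose $v_1v_3$ lies in a multigon $M$. As $M$ is incident with both $B_1$ and $B_2$, Lemma~\ref{lm-multigon} forces the order of $M$ to be two or three. If $M$ is a trigon, then within $\{v_1,v_2,v_3\}$ each of $v_1,v_3$ has five incident edges and $v_2$ has four, so $\{v_1,v_2,v_3\}$ is a side of an odd cut of size $4$; this is impossible, since $G$ has no odd cut of size less than six. If $M$ is a bigon, then $\{v_1,v_2,v_3\}$ is a side of an odd cut of size $6$, which by Lemma~\ref{lm-odd-cut} must be trivial; hence $|V(G)|=4$ and $G$ is obtained from $K_4$ by doubling every edge. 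That graph has a $6$-edge-colouring --- colour the two parallel copies of a proper $3$-edge-colouring of $K_4$ with $\{\ca,\cb,\cc\}$ and $\{\cd,\ce,\cf\}$ respectively --- contradicting that $G$ is a counterexample. Hence $v_1v_3$ lies in no multigon and $f$ is adjacent to exactly the two bigons $B_1$ and $B_2$.

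It remains to show that $F_1$ is $\ge 5$-big; the assertion for $F_2$ then follows by the symmetry interchanging $v_1$ with $v_3$. Here I would mimic Lemma~\ref{lm-3-face-trigon}. Apply a swap --- with $v_1$, $v_2$ and the third vertex of $f$ in the roles of its $v_1,v_2,v_3$ --- that turns $G$ into a plane graph $G'$ in which $B_1$ has been extended to a trigon, without decreasing the number of quadragons; by Lemma~\ref{lm-swap} and the extremal choice of $G$, the graph $G'$ has a $6$-edge-colouring. Applying Lemma~\ref{lm-e-trigon} to an edge of this new trigon and then undoing the swap, by the same chain-swap and recolouring manipulations as in Lemma~\ref{lm-3-face-trigon}, one should obtain an $e$-colouring of $G$ with $e$ an edge of $B_1$ of a rigid form: $e$ receives exactly the colours $\ca,\cb,\cc$, the other edge of $B_1$ receives a single further colour, the edge $v_2v_3$ receives $\ca$, and $v_1v_3$ receives one of the two remaining colours. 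With such an $e$-colouring fixed, for each colour $c\neq\ca$ take a $c$-mate $M_c$ (Lemma~\ref{lm-mate}); a trivial $M_c$ is excluded at once since $v_1$ and $v_2$ are each incident with three $\ca$-coloured edges, so $M_c$ is non-trivial and contains at least five edges of colour $c$ (Proposition~\ref{prop-mate}). Since $e\in M_c$, the cut $M_c$ separates $v_1$ from $v_2$, hence both edges of $B_1$ lie in $M_c$; and since $v_2v_3$ is coloured $\ca$ and $M_c$ already contains the $\ca$-coloured edge $e$, the vertex $v_3$ must lie on $v_2$'s side of $M_c$, so $v_1v_3\in M_c$. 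Exactly as in Lemma~\ref{lm-3-face-trigon}, one then locates an edge $e_c$ of $F_1$ lying in $M_c$ which must be coloured $c$, argues that the face $f_c$ on the other side of $e_c$ is distinct from $F_1$ and from the faces already encountered, is incident with no end-vertex of $e$, and carries a second edge of colour $c$, hence is a $\ge 4$-face; since the faces $f_c$ are pairwise distinct over the five colours $c\neq\ca$, the face $F_1$ is $\ge 5$-big.

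The routine parts are the ``exactly two bigons'' claim and the small-graph case (second paragraph) and, given a sufficiently rigid $e$-colouring, the mate bookkeeping. The step I expect to be the main obstacle is the construction in the third paragraph: one must exhibit an eligible swap that extends $B_1$ to a trigon, is covered by Lemma~\ref{lm-swap} and by the extremal choice of $G$, and --- the delicate point --- one must push the colouring of $G'$ back to an $e$-colouring of $G$ of the required rigid form. Because a bigon has only two edges rather than the three of a trigon, more edges must be colour-controlled than in Lemma~\ref{lm-3-face-trigon} in order that every colour be ``accounted for'' in the mates; this is the analogue of the four-case analysis in Lemma~\ref{lm-e-trigon} together with the extra swap argument in Lemma~\ref{lm-3-face-trigon}, and it may force one to use a swap of size six rather than four.
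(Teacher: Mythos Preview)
Your plan is the paper's plan: a size-four swap that extends $B_1$ to a trigon, a $6$-edge-colouring of the swapped graph $G'$, a short case analysis forcing one edge to carry a distinguished colour $\ca$, conversion to an $e$-colouring of $G$, and then the mate argument for the five colours $c\neq\ca$. Two details need fixing. First, Lemma~\ref{lm-e-trigon} is a statement about minimal counterexamples and cannot be applied to $G'$ (which is $6$-edge-colourable); instead, as in the proof of Lemma~\ref{lm-3-face-trigon}, one works directly with the $6$-colouring of $G'$ and observes that the new edge $v_3v_4$ must be coloured $\ca$, since otherwise undoing the swap yields a proper $6$-colouring of $G$. Second, in your labelling (with $v_2$ the vertex shared by the two bigons) the correct swap is the $v_2v_1v_3v_4$-swap with $v_4$ the neighbour of $v_2$ on $F_2$ other than $v_3$: it deletes the simple edge $v_1v_3$ and the edge $v_2v_4$, adds a third $v_1v_2$-edge and a new $v_3v_4$-edge, and leaves $B_2$ intact. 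In the resulting $e$-colouring the simple edge $v_1v_3$ and the edge $v_2v_4$ are coloured $\ca$ while the two edges of $B_2$ receive $\ce,\cf$ --- you had the roles of $v_1v_3$ and $v_2v_3$ interchanged. With this, each $c$-mate for $c\neq\ca$ is forced to contain \emph{both} bigons (not just $B_1$), and your mate bookkeeping for $F_1$ goes through verbatim; no size-six swap is needed. Incidentally, your direct cut-size argument for ``exactly two bigons'' is a mild simplification over the paper, which derives that fact as a byproduct of the colouring analysis of $G'$.
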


\begin{figure}
\begin{center}
\epsfbox{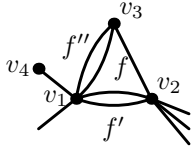}
\end{center}
\caption{Notation used in the proof of Lemma~\ref{lm-3-face-23-bigon}.}
\label{fig-3-face-23-bigon}
\end{figure}

\begin{proof}
By Lemma~\ref{lm-3-face-trigon}, $f$ is not adjacent to a trigon, and
since $G$ has no non-trivial odd cut of size six by Lemma~\ref{lm-odd-cut},
$f$ cannot be adjacent to three bigons.
Let $v_1$, $v_2$ and $v_3$ be the vertices of $f$ in such an order that
there is a bigon between $v_1$ and $v_2$ and between $v_1$ and $v_3$.
Let $f'$ be the other face adjacent to the bigon between $v_1$ and $v_2$ and
$f''$ the other face adjacent to the bigon between $v_1$ and $v_3$.
Finally, let $v_4$ be the neighbor of $v_1$ on $f''$ different from $v_3$.
Also see Figure~\ref{fig-3-face-23-bigon}.

Consider an $e$-coloring as described in Lemma~\ref{lm-e-trigon} for the eligible sequence $v_1v_2v_3v_4$.
Note that the two edges of the bigon between $v_1v_3$ are colored with $\ce$ and $\cf$.
Consider a $c$-mate $M_c$ for $c\not=\ca$. This mate must contain all the edges of the bigons
between $v_1$ and $v_2$ and between $v_1$ and $v_3$.
Let $e_c$ be the edge of $f'$ contained in $M_c$. The color of $e_c$ must be $c$ and
the other face containing $e_c$ must contain another edge with the color $c$.
Consequently, it is a $\ge 4$-face. We conclude (by considering all choices of $c$) that
$f'$ is $\ge 5$-big face.
A symmetric argument applies to the face adjacent to the bigon between $v_1$ and $v_3$.
\end{proof}

Before considering $3$-faces adjacent to a single bigon,
we have to prove the following lemma:

\begin{lemma}
\label{lm-trigon-2big}
In a minimal counterexample, any trigon adjacent to an $\le 2$-big face
is also adjacent to an $\ge 4$-big face.
\end{lemma}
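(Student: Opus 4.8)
The plan is to exploit the same mate-based machinery used in the previous lemmas, but now choosing the edge $e$ inside the trigon and using Lemma~\ref{lm-e-trigon} together with Lemma~\ref{lm-face-trigon} and Lemma~\ref{lm-3-face-trigon}. Let $t$ be a trigon adjacent to an $\le2$-big face $f$, and let $e=vv'$ be an edge of $t$ incident with $f$; fix an $e$-coloring as in Lemma~\ref{lm-e-trigon}, so that $e$ gets colors $\ca,\cb,\cc$ and the other two edges of $t$ get $\cd,\ce$, while the four further edges at $v,v'$ are all colored $\ca$. Take the $\cf$-mate $M_\cf$ guaranteed by Lemma~\ref{lm-mate}. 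As in the proof of Lemma~\ref{lm-face-trigon}, all edges of $M_\cf$ outside $t$ are colored $\cf$, hence none of them lies in a multigon, and $M_\cf$ is non-trivial (it meets $t$ but also has edges of color $\cf$, and it cannot be a trivial cut at $v$ or $v'$ since those vertices carry no $\cf$-edge). By Proposition~\ref{prop-mate}, $M_\cf$ contains at least five $\cf$-colored edges in addition to the three trigon edges.

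The idea is then to walk around the faces that $M_\cf$ separates. Each $\cf$-edge of $M_\cf$ not in $t$ is incident with two faces; the key point to push is that, for all but a bounded number of exceptional positions, the face on the appropriate side of such an edge must be a $\ge4$-face, because it contains a second $\cf$-edge of $M_\cf$ (using Proposition~\ref{prop-mate} again, exactly as in Lemma~\ref{lm-face-quadragon}) and no $\cf$-edge lies in a multigon. Counting: the trigon $t$ has two other adjacent faces besides $f$, and $f$ itself is $\le2$-big, so among the faces "visible" from $M_\cf$ few can be $\le3$-faces or bigons. The goal is to locate one face $g$, adjacent to $t$, that is forced to be incident with many ($\ge4$) of the $\ge4$-faces arising from distinct $\cf$-edges of $M_\cf$, and thus is $\ge4$-big; since $f$ already accounts for one side and is $\le2$-big, this $g$ must be one of the other two faces adjacent to $t$. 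Here I would invoke Lemma~\ref{lm-3-face-trigon} to rule out the degenerate case where $t$ is adjacent to a $3$-face (then all neighboring faces are $\ge5$-big and we are done immediately), and Lemma~\ref{lm-multigon} to control incident multigons, so that the only real case is when $t$'s two non-$f$ neighbors are $\ge4$-faces and we must show one of them is $\ge4$-big.

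The main obstacle I anticipate is the bookkeeping of the "exceptional" positions: the definition of $\ge4$-big requires \emph{four} distinct $\ge4$-face neighbors, and a priori a face adjacent to $t$ could fail to be $\ge4$-big by having some of its boundary edges adjacent to $3$-faces, bigons, or the end-vertices of $e$. So the crux is to argue that, because $f$ absorbs the $\le2$-big slack and $M_\cf$ forces $\ge5$ well-separated $\cf$-edges, there simply is not enough room for both non-$f$ neighbors of $t$ to stay $\le3$-big simultaneously. Concretely I would assume for contradiction that every face adjacent to $t$ is $\le3$-big, let $f_1,f_2$ be the two non-$f$ neighbors, and track where the $\cf$-edges of $M_\cf$ sit relative to $f,f_1,f_2$ and to $v,v'$; each $\cf$-edge either borders one of these three faces or borders a fourth face that is then $\ge4$-face, and I would derive that $f_1$ (or $f_2$) meets at least four such distinct $\ge4$-faces, contradicting $\le3$-bigness. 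A possibly needed refinement, parallel to the swap arguments in Lemmas~\ref{lm-3-face-trigon} and \ref{lm-3-face-23-bigon}, is to first perform a $v_1v_2v_3v_4$-swap turning $t$ into a quadragon and then apply Lemma~\ref{lm-face-quadragon}, which directly gives a $\ge5$-big face; if one of the two faces adjacent to $t$ survives the swap as a face adjacent to the new quadragon, Lemma~\ref{lm-face-quadragon} finishes it, and the extra trigon/bigon at the swapped pair is controlled by the "as many trigons/bigons as possible" extremality of the minimal counterexample. I expect the cleanest route to be this swap-to-quadragon reduction, with the mate count used only to handle the residual configuration where both faces adjacent to $t$ get destroyed by the swap.
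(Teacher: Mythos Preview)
Your proposal has two genuine gaps.

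First, a structural misconception: a trigon (three parallel edges between $v$ and $v'$) is adjacent to exactly \emph{two} faces, the face $f$ on one side and a single face $f'$ on the other. There are no ``two non-$f$ neighbors $f_1,f_2$''; the statement is precisely that the unique other face $f'$ is $\ge 4$-big. Once this is clear, the whole ``not enough room for both $f_1$ and $f_2$'' counting collapses, and the invocation of Lemma~\ref{lm-3-face-trigon} becomes irrelevant (that lemma concerns a $3$-face adjacent to the trigon, not a third face adjacent to the trigon itself).

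Second, using only the $\cf$-mate is not enough, and the swap-to-quadragon shortcut fails as written. The $\cf$-mate gives you only $\cf$-coloured edges outside $t$; this does not by itself produce four distinct $\ge 4$-face neighbours of $f'$. The paper's argument uses \emph{four} mates $M_c$, $c\in\{\cb,\cc,\cd,\cf\}$, and the crucial step you are missing is how the hypothesis that $f$ is $\le 2$-big is exploited: since at most two $\ge 4$-faces touch $f$, and $M_\cf$ forces one of them to share an $\cf$-edge with $f$, one may assume by symmetry that the other (if it exists) shares an edge of colour $\not\in\{\cb,\cc,\cd\}$; then each $M_c$ with $c\in\{\cb,\cc,\cd\}$, when it crosses $f$, is forced through a $\le 3$-face and hence picks up an $\cf$-edge. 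Consequently each such $M_c$ has already spent its unique $\cf$-slot, so on $f'$ it must use a $c$-coloured edge, whose other face is a $\ge 4$-face by Proposition~\ref{prop-mate}. Four values of $c$ give four distinct such faces, hence $f'$ is $\ge 4$-big. As for the swap idea: Lemma~\ref{lm-face-quadragon} is a statement about a minimal counterexample, but after the swap $G'$ has more quadragons and therefore admits a $6$-edge-colouring; you cannot apply Lemma~\ref{lm-face-quadragon} to $G'$, and the colouring of $G'$ does not directly give you bigness information about faces of $G$.
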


\begin{proof}
Let $e=v_1v_2$ be an edge of the trigon, $f$ the $\le 2$-big face adjacent to the trigon and
$f'$ the other face adjacent to the trigon.
Consider an $e$-coloring described in Lemma~\ref{lm-e-trigon} obtained for an arbitrary eligible sequence $v_1v_2v_3v_4$ and
let $M_c$ be a $c$-mate for $c\not=\ca$.

The mate $M_\cf$ contains at least five edges colored with $\cf$ by Proposition~\ref{prop-mate} and
no edges of other colors except for those contained in the trigon.
Hence, one of the (at most) two $\ge 4$-faces adjacent to $f$
shares with $f$ an edge colored $\cf$.
By symmetry with respect to the colors $\cb$, $\cc$, $\cd$ and $\ce$,
we can assume that the other $\ge 4$-face adjacent to $f$ (if it exists)
shares with $f$ an edge with a color different from $\cb$, $\cc$ and $\cd$.

Consider now a mate $M_c$, $c\in\{\cb,\cc,\cd,\cf\}$.
On the face $f$,
the mate $M_c$ either contains an edge colored with $\cf$ or (if $c\not=\cf$)
an edge colored with the color $c$ that lies on a $\le 3$-face (which
forces $M_c$ to contain an edge incident with this face that is colored with $\cf$).
In both cases,
since the mate $M_c$ contains at least five edges colored with $c$ by Proposition~\ref{prop-mate},
one of these edges (which all are colored with $c$) must lie on the face $f'$ and
the face containing this edge different from $f'$ is a $\ge 4$-face.
Since these faces are different for different $c\in\{\cb,\cc,\cd,\cf\}$,
the face $f'$ is $\ge 4$-big.
\end{proof}

We are now ready to consider $3$-faces adjacent to a single bigon.

\begin{lemma}
\label{lm-3-face-1-bigon}
If a $3$-face $f$ of a minimal counterexample is adjacent to a single bigon and
the other face adjacent to this bigon $\le 2$-big, then
the two other faces adjacent to $f$ are $\ge 3$-big.
\end{lemma}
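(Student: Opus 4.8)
The plan follows the template of the preceding structural lemmas (Lemmas~\ref{lm-trigon-2big}, \ref{lm-3-face-trigon}, \ref{lm-3-face-23-bigon}): reduce to a core configuration, pass to a convenient $e$-coloring obtained after a swap, and then extract $\ge 4$-faces from mates, with the hypothesis that $f'$ is $\le 2$-big playing the part it plays in Lemma~\ref{lm-trigon-2big}. Let the bigon lie between $v_1$ and $v_2$, with edge $e=v_1v_2$ on $f$ and edge $e'=v_1v_2$ on $f'$, let $v_3$ be the remaining vertex of the triangle $f=v_1v_2v_3$, and let $g_1$, $g_2$ be the faces adjacent to $f$ across $v_2v_3$ and $v_1v_3$, respectively. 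Since $f$ is adjacent to only one bigon, neither $v_2v_3$ nor $v_1v_3$ lies in a multigon, so $g_1$ and $g_2$ are $\ge 3$-faces. If the multigon containing $e$ has order three, then $f$ is adjacent to a trigon and Lemma~\ref{lm-3-face-trigon} already gives that $g_1$ and $g_2$ are $\ge 5$-big; order four is impossible, since by Lemma~\ref{lm-face-quadragon} it would force the $3$-face $f$ to be $\ge 5$-big. So assume $e$ and $e'$ are the only edges between $v_1$ and $v_2$. As the configuration and the hypothesis on $f'$ are invariant under exchanging $v_1$ and $v_2$, which exchanges $g_1$ and $g_2$, it suffices to prove that $g_2$ is $\ge 3$-big.

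Let $v_4\ne v_3$ be the neighbor of $v_1$ on $g_2$. If $v_1v_4$ lies in a multigon, then $g_2$ is adjacent to it and an earlier lemma (Lemma~\ref{lm-face-quadragon} for a quadragon, Lemma~\ref{lm-3-face-trigon} or~\ref{lm-trigon-2big} for a trigon, and for a bigon the sub-cases already treated or a direct small analysis) finishes the job; so assume $v_1v_4$ lies in no multigon. The $v_1v_2v_3v_4$-swap replaces the bigon between $v_1$ and $v_2$ by a trigon and deletes only edges lying in no multigon, so the resulting graph $G'$, of the same order, is $6$-regular with no odd cut of size less than six by Lemma~\ref{lm-swap}, has at least as many quadragons as $G$, and has, in case of equality, strictly more trigons (a multigon possibly created at $v_3v_4$ only helps). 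By the choice of $G$, $G'$ has a $6$-edge-coloring; as in the proofs of Lemmas~\ref{lm-3-face-trigon} and~\ref{lm-3-face-23-bigon}, a short recoloring argument then fixes the color of the new edge $v_3v_4$ and yields an $e$-coloring of $G$ of the form used there --- $e$ receiving exactly three colors, with the remaining edges at $v_1$ and at $v_2$ colored so that each of $v_1$, $v_2$ is incident with exactly one edge of each color other than one distinguished color $\ca$. (The cases where $e$ would receive five colors, or where the swap is degenerate, are handled as in Lemma~\ref{lm-e-trigon}.)

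Now run the mates. For each color $c$, Lemma~\ref{lm-mate} gives a $c$-mate $M_c$; it contains both $e$ and $e'$, hence exactly one of $v_1v_3$ and $v_2v_3$, and by Proposition~\ref{prop-mate} each non-trivial $M_c$ carries at least five $c$-colored edges. Because $f'$ is $\le 2$-big, for all but at most two colors $c\ne\ca$ the $c$-colored edges of $M_c$ on the side of $f'$ cannot escape into a $\ge 4$-face, which forces the relevant run of $c$-colored edges of $M_c$ to wind around the bigon, pass through the $3$-face $f$ into $g_2$ via $v_1v_3$, and leave $g_2$ through a further edge of $M_c$; since each of $v_1$ and $v_2$ carries only one edge of color $c$, the face on the far side of that edge, being a $\le 3$-face incident with neither $v_1$ nor $v_2$, would contain two $M_c$-edges of color $c$ sharing a vertex, which is impossible --- hence it is a $\ge 4$-face. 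Three such colors yield three pairwise distinct $\ge 4$-faces adjacent to $g_2$ (distinctness again from Proposition~\ref{prop-mate}), so $g_2$ is $\ge 3$-big; by symmetry so is $g_1$.

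The crux is the mate bookkeeping of the last step. One must verify that properness of the $e$-coloring at $v_1$, $v_2$, $v_3$ pins down the colors of $v_1v_3$ and $v_2v_3$, so that the non-$c$-colored edges of $M_c$ are accounted for and the run of $c$-colored edges really does exit $g_2$ as claimed; that, after discarding the at most two colors consumed near $f'$, at least three colors remain for which $M_c$ reaches $g_2$ through $v_1v_3$; and that the three $\ge 4$-faces produced are pairwise distinct. The few degenerate positions --- $v_4\in\{v_2,v_3\}$, non-eligibility of the swap, $g_1=g_2$, or one of $g_1,g_2$ coinciding with $f'$ or with the bigon --- must be checked by hand, as in the earlier lemmas of this section.
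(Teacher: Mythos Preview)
Your plan matches the paper's: the same $v_1v_2v_3v_4$-swap, the same $e$-coloring with $e$ carrying $\ca,\cb,\cc$, the other bigon edge $\cd$, and $v_1v_4,v_2v_3$ both $\ca$, and the same use of the $\le 2$-big hypothesis to isolate three colours whose mates yield $\ge 4$-faces on $g_2$. Two side remarks: the bigon case for $v_1v_4$ needs no separate treatment (the swap still strictly increases the trigon count, so minimality applies as before), and for the trigon case the observation you omit is that the six edges at $v_1$ are exactly the bigon, $v_1v_3$, and the trigon, so the face on the far side of a $v_1v_4$-trigon coincides with your $\le 2$-big $f'$ --- whence Lemma~\ref{lm-trigon-2big} gives $g_2$ $\ge 4$-big directly.

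The mate mechanism in your third paragraph is misdescribed, and this is a real gap rather than a deferred detail. Every $M_c$ with $c\neq\ca$ contains $v_1v_3$ (because $v_2v_3$ has colour $\ca$ and $e$ already fills the $\ca$-slot of $M_c$) and hence a second edge $e_c$ on $g_2$, \emph{regardless of what happens near $f'$}; so the picture of $c$-edges being ``unable to escape'' near $f'$ and therefore ``winding around'' into $g_2$ is not how the argument works. What the $\le 2$-big hypothesis actually buys is this: the bigon together with $v_1v_3$ supply $M_c$ with colours $\ca,\cb,\cc,\cd,\ce$, so the only non-$c$ colour still unaccounted for is $\cf$; for at least three of the colours $c\in\{\cb,\cc,\cd,\ce\}$, the $f'$-edge of $M_c$ is either itself $\cf$-coloured, or is $c$-coloured but lies in a $\le 3$-face whose other $M_c$-edge is then forced to be $\cf$. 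With the $\cf$-slot thus consumed on the $f'$ side, every remaining edge of $M_c$ --- in particular $e_c$ --- has colour $c$, and Proposition~\ref{prop-mate} then yields the $\ge 4$-face across $e_c$. Your contradiction (``two $M_c$-edges of colour $c$ sharing a vertex'') presupposes that $e_c$ has colour $c$, which your sketch never establishes.
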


\begin{figure}
\begin{center}
\epsfbox{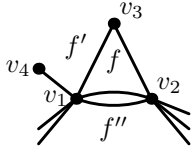}
\end{center}
\caption{Notation used in the proof of Lemma~\ref{lm-3-face-1-bigon}.}
\label{fig-3-face-1-bigon}
\end{figure}

\begin{proof}
Let $v_1$, $v_2$ and $v_3$ be the vertices of $f$ in such an order that
the bigon is between $v_1$ and $v_2$.
Let $e$ be one of the edges of the bigon between $v_1$ and $v_2$,
$f'$ the face incident with the edge $v_1v_3$,
$f''$ the other face adjacent to the bigon (which is $\le 2$-big) and
let $v_4$ be the neighbor of $v_1$ on $f'$ different from $v_3$.
Also see Figure~\ref{fig-3-face-1-bigon}.

We show that $f'$ is $\ge 3$-big. The argument for the face incident with the edge $v_2v_3$ is symmetric.
If $G$ contains a trigon between the vertices $v_1$ and $v_4$, the claim follows from Lemma~\ref{lm-trigon-2big}.
Otherwise, 
consider an $e$-coloring described in Lemma~\ref{lm-e-trigon} for the eligible sequence $v_1v_2v_3v_4$.
By swapping the colors $\ce$ and $\cf$ if necessary, we can assume that the color of the edge $v_1v_3$ is $\ce$.
Let $M_c$ be a $c$-mate for $c\not=\ca$. Observe that each mate $M_c$, $c\not=\ca$,
contains the two edges of the bigon as well as the edge $v_1v_3$, which is colored by $\ce$.

By Proposition~\ref{prop-mate}, the mate $M_\cf$ contains at least five edges colored with $\cf$.
All these edges must lie on $\ge 4$-faces (since their end-vertices are distinct).
Hence, one of the (at most) two $\ge 4$-faces adjacent to $f''$
shares an edge with color $\cf$ with $f''$.
Let $c_0$ be the color of the edge of $f''$ shared with the other $\ge 4$-face if it exists;
otherwise, let $c_0=\cf$.

On the face $f''$,
each of the mates $M_c$, $c\in\{\cb,\cc,\cd,\ce\}\setminus\{c_0\}$,
either contains an edge colored with $\cf$ or
an edge colored with the color $c$ that lies in a $\le 3$-face.
In the latter case, the other edge of that face contained in $M_c$ must have the color $\cf$.
Hence, we have exhibited the edge colored with $\cf$ of $M_c$ in both cases.
Since the mate $M_c$ contains at least five edges colored with $c$ by Proposition~\ref{prop-mate},
it contains at least three additional edges colored with $c$.
One of these edges must lie on the face $f'$ and the face containing this edge different from $f'$
must be a $\ge 4$-face. Since these faces are different for different choices of $c$,
the face $f'$ is $\ge 3$-big.
\end{proof}

We finish this subsection with an observation on faces around $3$-faces adjacent to trigons.

\begin{lemma}
\label{lm-35-face-bigon-trigon}
A minimal counterexample $G$ does not contain a vertex $v_1$ incident
with mutually adjacent $3$-face $f_3$ and a dangerous $5$-face $f_5$,
a bigon adjacent to $f_3$ but not to $f_5$, and a trigon $t$ adjacent to $f_5$.
\end{lemma}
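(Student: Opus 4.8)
I would prove this lemma by contradiction, following the same template as the preceding lemmas: assume such a configuration exists, pick a convenient edge $e$ (an edge of the bigon contained in $f_3$, or an edge of the trigon $t$), invoke Lemma~\ref{lm-e-trigon} to get a well-structured $e$-coloring with distinguished color $\ca$, and then use mates (Lemma~\ref{lm-mate}, Proposition~\ref{prop-mate}) to derive a contradiction with one of the already-established structural facts — most likely with the bounds from Lemma~\ref{lm-3-face-trigon} or Lemma~\ref{lm-3-face-1-bigon}, or else with Lemma~\ref{lm-35-face-bigon-trigon}'s own hypotheses via a swap. The key point is that $v_1$ is incident with two multigons simultaneously (the bigon in $f_3$ and, through $f_5$, the trigon $t$), so by Lemma~\ref{lm-multigon} the two remaining edges at $v_1$ are essentially forced; this rigidity, combined with the fact that $f_5$ is dangerous, should over-constrain the coloring.

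Concretely, first I would set up the local picture: let the bigon in $f_3$ be between $v_1$ and some vertex $u$; the third vertex of $f_3$ is $w$, with edges $uw$ and $v_1w$ on $f_3$. On the $f_5$ side, $t$ is a trigon incident with $v_1$ that is $f_5$-incident to a bigon, and $f_5$ is dangerous (so it is adjacent to two trigons and a bigon, or to a trigon and three bigons — in either case its boundary is heavily occupied by multigons). By Lemma~\ref{lm-multigon}, at $v_1$ the bigon (order $2$) and $t$ (order $3$) already use five of the six edge-slots, so $v_1$ has exactly one further edge, and $u$ and the ``far'' vertex of $t$ each have exactly four further edges. I would choose $e$ to be an edge of $t$ incident with $v_1$ and apply Lemma~\ref{lm-e-trigon}: then the other two edges at $v_1$ (namely the third edge of $t$ and the lone remaining edge) and the other two edges at the far endpoint of $t$ all get color $\ca$, and $e$ gets exactly $\{\ca,\cb,\cc\}$. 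But one of those ``other two edges at $v_1$'' is an edge of the bigon in $f_3$ — so the bigon has an $\ca$-colored edge, forcing its partner edge to have some color $d\in\{\cb,\cc,\cd,\ce,\cf\}$ at $v_1$; since the only $\ca$ at $v_1$ besides $e$-internal ones is accounted for, this pins down the colors of $f_3$ tightly.

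The heart of the argument is then the mate analysis on the $f_5$ side. Since $f_5$ is dangerous and $t$ is $f_5$-incident to a bigon, the $\cf$-mate $M_\cf$ (which, as in Lemma~\ref{lm-face-trigon}, contains the three edges of $t$ and otherwise only $\cf$-colored edges) must leave $f_5$ through a bigon edge or through an edge of $f_5$; but every edge of $f_5$ other than those of $t$ lies in a multigon whose edges carry colors $\ne\cf$ only (because those multigons meet vertices that, being $6$-regular and incident to a trigon or bigon on $f_5$, are saturated) — this is precisely the kind of contradiction the remark after Lemma~\ref{prop-mate} is designed to produce, forcing $f_5$ to be a $\ge 4$-face, which it is not. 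Making this last step airtight — correctly enumerating where $M_\cf$ can exit $f_5$ given both sub-cases of ``dangerous $5$-face,'' and verifying that each escape route is blocked by the saturation of the relevant vertices — is the main obstacle; the earlier bookkeeping at $v_1$ is routine once Lemma~\ref{lm-multigon} and Lemma~\ref{lm-e-trigon} are applied. If the direct mate contradiction does not close immediately, the fallback is to perform a $v_1\ldots$-swap (legal by Lemma~\ref{lm-swap}) that increases the number of trigons or bigons by merging the bigon in $f_3$ with a neighbour, contradicting the extremal choice of $G$.
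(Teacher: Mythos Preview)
Your approach diverges from the paper's, and it has a real gap.

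The paper does not use $e$-colorings or mates here at all. It performs a six-vertex swap around the union $f_3\cup f_5$ (either the $v_1v_2v_3v_4v_5v_6$-swap or the $v_2v_3v_4v_5v_6v_1$-swap, depending on where the second trigon of the dangerous $5$-face sits), obtains a $6$-edge-coloring of the swapped graph $G'$ by minimality and Lemma~\ref{lm-swap}, and then runs an inclusion--exclusion argument on the color sets of the three enlarged multigons in $G'$ to find a single color $c$ present in all three. Removing those three $c$-edges and reinserting the original edges colored $c$ yields a $6$-edge-coloring of $G$, a contradiction. The substantive content is this color-intersection step, which is different in each of the three sub-cases.

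Your mate argument does not close. After applying Lemma~\ref{lm-e-trigon} to an edge of $t$, the bigon $v_2v_3$ that is $f_5$-incident with $t$ has colors $\{\ca,\cf\}$ (one edge at $v_2$ is forced to be $\cf$), so your assertion that ``every edge of $f_5$ other than those of $t$ lies in a multigon whose edges carry colors $\ne\cf$ only'' is false. More importantly, when $f_5$ is dangerous with two trigons and a single bigon --- say trigon $t$ at $v_1v_2$, trigon at $v_3v_4$, bigon at $v_2v_3$, and $v_4v_5$ a simple edge --- the $\cf$-mate $M_\cf$ exits $f_5$ cleanly through $v_4v_5$ (which then has color $\cf$), and in fact every mate $M_c$ with $c\neq\ca$ is forced through that same edge. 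You get information about the face on the far side of $v_4v_5$, but no contradiction; the bigon $v_1v_6$ in $f_3$ sits entirely on the $v_1$-side of every such mate and contributes nothing. (Your remark that this would ``force $f_5$ to be a $\ge 4$-face, which it is not'' is also off: $f_5$ is a $5$-face.)

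Your ``fallback'' swap is in fact the whole proof, and the part you have not supplied --- choosing the right swap and carrying out the color-intersection argument across three multigons in $G'$ --- is exactly where the work lies.
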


\begin{figure}
\begin{center}
\epsfbox{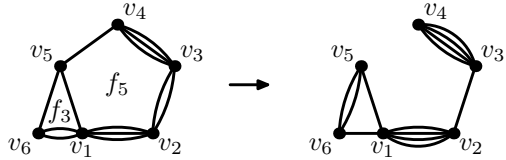}
\end{center}
\caption{Notation used in the proof of Lemma~\ref{lm-35-face-bigon-trigon}.}
\label{fig-35-face-bigon-trigon}
\end{figure}

\begin{proof}
Let $v_2$ be the other vertex contained in $t$,
let $v_3$, $v_4$ and $v_5$ be the other vertices of $f_5$ (in this order) and
let $v_6$ be the remaining vertex of $f_3$.
Also see Figure~\ref{fig-35-face-bigon-trigon}.
Note that $f_3$ is not adjacent to a trigon by Lemma~\ref{lm-3-face-trigon}.

Let $G'$ be the graph obtained from $G$ by the $v_1v_2v_3v_4v_5v_6$-swap.
By the minimality of $G$ and Lemma~\ref{lm-swap},
the graph $G'$ has a $6$-edge-coloring.
Let $C$ be the set of colors assigned to the four edges between $v_1$ and $v_2$ in $G'$.
Both the colors of the edges between $v_5$ and $v_6$ are in $C$: the two colors not contained
in $C$ are assigned to the edges $v_1v_6$ and $v_1v_5$ and thus the two edges between $v_5$ and $v_6$
cannot have either of these two colors.

At least three of the colors from $C$ are used on the edges of the quadragon
between $v_3$ and $v_4$ in $G'$.
Hence, there is a color $c$ assigned to an edge between $v_1$ and $v_2$,
an edge between $v_3$ and $v_4$, and an edge between $v_5$ and $v_6$.
Remove the three edges colored with $c$ between these three pairs of vertices and
insert the edges of $G$ missing in $G'$.
Coloring the new edges with $c$ yields a $6$-edge-coloring of $G$.
\end{proof}

\subsection{Structure of $4$-faces}

In this subsection, we prove two simple lemmas on $4$-faces.

\begin{lemma}
\label{lm-4-face-trigon}
If a trigon of a minimal counterexample $G$ is adjacent to a $4$-face $f$,
then $f$ is adjacent to no other multigon.
\end{lemma}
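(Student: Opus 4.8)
The plan is to follow the pattern established in Lemma~\ref{lm-3-face-trigon} and Lemma~\ref{lm-4-face-trigon}'s companion lemmas: pick an edge $e$ of the trigon adjacent to the $4$-face $f$, take the $e$-coloring guaranteed by Lemma~\ref{lm-e-trigon} (so $e$ gets exactly three colors $\ca,\cb,\cc$, and the color $\ca$ appears on the other two edges at each end-vertex of $e$), and argue that $f$ cannot be adjacent to a second multigon. Write $e=v_1v_2$, let $v_3,v_4$ be the other two vertices of $f$, and suppose for contradiction that some second multigon of $f$ is incident with one of $v_3$ or $v_4$. Since the two edges incident with $v_1$ other than those of the trigon both carry color $\ca$, and likewise at $v_2$, the two edges of $f$ not in the trigon (namely $v_2v_3$ and $v_1v_4$, up to labelling, plus the one edge $v_3v_4$) are constrained: in particular $v_2v_3$ and $v_1v_4$ both carry color $\ca$. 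A multigon incident with $v_3$ or $v_4$ and adjacent to $f$ would force two parallel edges between, say, $v_3$ and $v_4$, hence two edges of the same color at $v_3$ or $v_4$ — but $v_3,v_4$ are not end-vertices of $e$, so the $e$-coloring is proper there, a contradiction. This already handles the case where the second multigon lies on the edge $v_3v_4$.

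The remaining subtlety is a second multigon sharing the edge $v_2v_3$ or the edge $v_1v_4$ with $f$ — i.e. a bigon (or larger multigon) parallel to one of the $\ca$-colored edges. Here the same ``two edges of equal color at a non-$e$ vertex'' argument does not immediately bite, because $v_2$ (resp. $v_1$) \emph{is} an end-vertex of $e$; one must look at $v_3$ (resp. $v_4$). If there is a bigon between $v_2$ and $v_3$, then $v_3$ is incident with two parallel edges to $v_2$; since $v_3$ is proper in the $e$-coloring these two edges have distinct colors, one of which is the $\ca$ forced above, say the colors are $\ca$ and $x$. I would then invoke the mate machinery exactly as in Lemma~\ref{lm-3-face-trigon}: consider a $\cf$-mate $M_\cf$, which by Proposition~\ref{prop-mate} contains the three trigon edges and at least two more $\cf$-colored edges, and note (as in Lemma~\ref{lm-face-trigon}) that no edge of $M_\cf$ outside the trigon lies in a multigon — contradicting the presence of this parallel pair if it were forced to meet $M_\cf$. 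Alternatively, and more cleanly, I would perform a swap (along the curve through $v_1,v_2,v_3$ and a neighbor of $v_3$) to merge the trigon and the bigon into a multigon of order four or five; Lemma~\ref{lm-multigon} forbids order five and order-four incident sums exceeding the bound, while the maximality of the number of quadragons in the choice of minimal counterexample, together with Lemma~\ref{lm-swap}, yields the contradiction.

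The main obstacle I expect is the bookkeeping in the bigon-on-a-side case: one must verify that after forcing $v_2v_3$ and $v_1v_4$ to be $\ca$-colored, a parallel bigon there genuinely violates either properness at $v_3$/$v_4$ or one of the structural extremality conditions (quadragon/trigon count) of the minimal counterexample, rather than just producing a harmless reconfiguration. Concretely, the delicate point is ruling out a bigon between $v_1$ and $v_4$ \emph{and simultaneously} a bigon between $v_2$ and $v_3$, which would make $f$ adjacent to the trigon and two bigons with all the non-trigon edges colored $\ca$ — there $v_3$ and $v_4$ would each see an $\ca$ edge of $f$ together with an $\ca$-parallel edge, an immediate properness violation, so this doubly-bad case collapses quickly, and the single-bigon case is the one needing the swap-or-mate argument above. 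Once that case analysis is complete, the conclusion that $f$ is adjacent to no multigon other than the trigon follows.
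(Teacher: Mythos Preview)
Your proposal has real gaps, and the paper's proof is both shorter and cleaner than either of your suggested routes.

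First, a counting slip: in the $e$-coloring of Lemma~\ref{lm-e-trigon}, each end-vertex of $e$ has \emph{three} edges outside the trigon, with colors $\ca,\ca,\cf$, not two edges both colored $\ca$. So you cannot conclude that $v_1v_4$ and $v_2v_3$ are $\ca$-colored; each could be $\cf$.

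Second, and more seriously, your argument for the $v_3v_4$ case is simply wrong. A multigon between $v_3$ and $v_4$ gives two parallel edges, but since the $e$-coloring is proper at $v_3$ and $v_4$, these two edges carry \emph{different} colors. There is no contradiction with properness. The $e$-coloring by itself does not exclude a multigon on $v_3v_4$.

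Third, your swap suggestion for the $v_2v_3$ (or $v_1v_4$) bigon case is incomplete. You correctly observe that after a swap increasing the number of quadragons the minimality of $G$ forces $G'$ to be $6$-edge-colorable---but that is not yet a contradiction. You must then show how to convert a $6$-edge-coloring of $G'$ into one of $G$ (or into the desired structural conclusion), and you do not indicate how. The $\cf$-mate idea you mention only reproduces Lemma~\ref{lm-face-trigon} (one edge of $f$ not in a multigon), which is far weaker than what you need.

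The paper's proof avoids all of this by doing a single $v_1v_2v_3v_4$-swap on the $4$-face itself. In $G'$ the trigon becomes a quadragon, so $G'$ is $6$-edge-colorable. If any edge between $v_3$ and $v_4$ in $G'$ shares a color $c$ with the quadragon, one recolors to obtain a $6$-edge-coloring of $G$ directly. Otherwise the $v_3v_4$ edges of $G'$ use only the two remaining colors $\ca,\cf$, so there are exactly two of them (a bigon), and then at $v_3$ and $v_4$ the colors $\ca,\cf$ are already used, forcing $G'$ to contain no $v_2v_3$ or $v_1v_4$ edge at all. Translating back to $G$, each of $v_2v_3$, $v_3v_4$, $v_4v_1$ is a single edge. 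This one swap simultaneously handles all three sides of $f$; no separate case analysis or mate argument is needed.
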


\begin{proof}
Let $v_1,\ldots,v_4$ be the vertices of $f$ in such an order that
the trigon adjacent to $f$ is between $v_1$ and $v_2$.
Apply the $v_1v_2v_3v_4$-swap. Since the resulting graph $G'$ contains
a new quadragon (and $f$ is not adjacent to a quadragon in $G$ by Lemma~\ref{lm-face-quadragon}),
$G'$ has a $6$-edge-coloring by the minimality of $G$ and Lemma~\ref{lm-swap}.
Let $\cb$, $\cc$, $\cd$ and $\ce$ be the colors of the edges of the quadragon in $G'$.
If one of these colors appears on the edges between $v_3$ and $v_4$,
we can use this color for the edges $v_1v_4$ and $v_2v_3$ and obtain a proper coloring of $G$.
Hence, $G'$ contains a bigon between the vertices $v_3$ and $v_4$,
the colors of its two edges are $\ca$ and $\cf$, and
$G'$ contains neither an edge $v_1v_4$ nor $v_2v_3$.
In particular, in $G$, $f$ is adjacent to a single multigon
which is the considered trigon.
\end{proof}

\begin{lemma}
\label{lm-4-face-3-bigon}
No $4$-face of a minimal counterexample is adjacent to three or four bigons.
\end{lemma}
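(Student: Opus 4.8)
The plan is to argue by contradiction: suppose some $4$-face $f$ is adjacent to at least three bigons. First I would dispose of the degenerate possibilities for the multigons incident with $f$. By Lemma~\ref{lm-face-quadragon} the face $f$ cannot be adjacent to a quadragon, since a $4$-face cannot be $\ge5$-big; and by Lemma~\ref{lm-4-face-trigon} it cannot be adjacent to a trigon, for then it would be adjacent to no other multigon and hence to at most one bigon. So every multigon adjacent to $f$ is a bigon, and at least three of the four edges of $f$ lie in one. Write $v_1v_2v_3v_4$ for the boundary of $f$ in cyclic order; I may assume the edges $v_1v_2$, $v_2v_3$, $v_3v_4$ lie in bigons, and I let $b$ denote the second edge of the bigon on $v_2v_3$.

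Next I would apply the $v_1v_2v_3v_4$-swap (legitimate: the required curve can be drawn inside $f$, and $v_2v_3$, $v_4v_1$ lie on $f$); call the result $G'$ and let $n_{12}$, $n_{34}$ be the two new edges it adds, between $v_1,v_2$ and between $v_3,v_4$. Tracking multiplicities, the swap turns the bigons on $v_1v_2$ and $v_3v_4$ into trigons, turns the bigon on $v_2v_3$ into the single edge $b$, and only lowers the multiplicity on $v_4v_1$; so it creates no quadragon and increases the number of trigons by exactly two. Hence, by Lemma~\ref{lm-swap} and the minimality of $G$ (least order, then most quadragons, then most trigons), $G'$ has a $6$-edge-coloring. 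The aim is then to adjust this coloring until $n_{12}$ and $n_{34}$ receive a common color $c$: once that is done, deleting $n_{12}$ and $n_{34}$ and re-inserting, with color $c$, the two edges of $f$ that the swap removed yields a $6$-edge-coloring of $G$ — the desired contradiction — because deleting $n_{12}$ and $n_{34}$ frees the color $c$ at each of $v_1,v_2,v_3,v_4$.

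The observation that makes the recoloring painless is the following. Let $T_{12}$ and $T_{34}$ be the three-element color sets of the two new trigons. Since the coloring is proper at $v_2$ and at $v_3$, the color of the surviving edge $b$ lies in neither $T_{12}$ nor $T_{34}$, whence $|T_{12}\cup T_{34}|\le5$ and $T_{12}\cap T_{34}\neq\emptyset$. Moreover, for any two colors $\ca,\cb\in T_{12}$, the $\ca\cb$-chain through the $\ca$-colored edge of the $v_1v_2$-trigon consists of just the two parallel trigon edges colored $\ca$ and $\cb$, so swapping it merely exchanges these two colors on these two edges (and likewise inside the $v_3v_4$-trigon). Let $c_1\in T_{12}$ and $c_2\in T_{34}$ be the colors of $n_{12}$ and $n_{34}$. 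If $c_1=c_2$ there is nothing to do; if $c_2\in T_{12}$ I would perform this trivial swap inside the $v_1v_2$-trigon to recolor $n_{12}$ with $c_2$; symmetrically if $c_1\in T_{34}$; and otherwise I would pick $c_3\in T_{12}\cap T_{34}$ (necessarily distinct from both $c_1$ and $c_2$) and perform one such swap inside each trigon, bringing both $n_{12}$ and $n_{34}$ to color $c_3$. In every case $n_{12}$ and $n_{34}$ end up with a common color.

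The step I expect to be the main obstacle is exactly this last case, $c_1\neq c_2$ with neither color lying in the other trigon's color set: without extra information one is tempted to Kempe-swap a long $c_1c_2$-chain, which is awkward to control and might run between the two trigons. The point of the observation $T_{12}\cap T_{34}\neq\emptyset$ — which costs nothing, coming straight from the one bigon edge $b$ that the swap leaves behind — is precisely that it lets us route everything through a common color $c_3$ using only the trivial two-edge swaps inside the trigons. Apart from that, the only things needing care are the bookkeeping that the swap leaves the number of quadragons unchanged (so that the minimality tie-break genuinely applies) and checking that the two reinserted edges can both be colored $c$ simultaneously, which holds since $n_{12}$ and $n_{34}$ are vertex-disjoint and their removal frees $c$ at all four vertices $v_1,v_2,v_3,v_4$.
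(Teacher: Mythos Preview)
Your proof is correct and follows the paper's approach: apply the $v_1v_2v_3v_4$-swap, observe that the two new trigons share a color (since the surviving edge $b=v_2v_3$ excludes one color from both, forcing $T_{12}\cap T_{34}\neq\emptyset$), and use that shared color to reverse the swap. The paper dispenses with your recoloring step entirely: since parallel edges are interchangeable for coloring purposes, once $T_{12}\cap T_{34}\neq\emptyset$ one may simply delete from each trigon \emph{any} edge of a common color $c$ and reinsert $v_2v_3$, $v_4v_1$ colored $c$ --- there is no need to arrange that the specific new edges $n_{12},n_{34}$ carry $c$, so the case split you flagged as ``the main obstacle'' never arises.
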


\begin{proof}
Let $v_1,\ldots,v_4$ be the vertices of $f$ in such an order that
there are bigons (at least) between the pairs $v_1$ and $v_2$, $v_2$ and $v_3$, and $v_3$ and $v_4$.
By the minimality of $G$ and Lemma~\ref{lm-swap}, the graph $G'$ obtained by the $v_1v_2v_3v_4$-swap
has a $6$-edge-coloring. Since $G'$ contains an edge $v_2v_3$,
the trigons between the pairs $v_1$ and $v_2$, and $v_3$ and $v_4$
must have two edges with the same color.
Remove the two edges of this color from the trigons and insert them
as edges between $v_2$ and $v_3$, and $v_1$ and $v_4$. This yields a $6$-edge-coloring of $G$.
\end{proof}

\subsection{Structure of $\ge 5$-faces}

We start this subsection with two simple lemmas on $5$-faces.

\begin{lemma}
\label{lm-5-face-trigon2}
If a minimal counterexample contains a $5$-face $f=v_1v_2v_3v_4v_5$ with a trigon between $v_1$ and $v_2$,
then there is no multigon between $v_2$ and $v_3$ or there is no multigon between $v_4$ and $v_5$.
\end{lemma}

\begin{proof}
Assume that there are multigons both between $v_2$ and $v_3$ and between $v_4$ and $v_5$.
Consider an $e$-coloring described in Lemma~\ref{lm-e-trigon} for the eligible sequence $v_1v_2v_3v_5$.
The three edges between $v_1$ and $v_2$ have colors $\ca$, $\cb$, $\cc$, $\cd$ and $\ce$,
there is a bigon between $v_2$ and $v_3$ with edges colored with $\ca$ and $\cf$,
none of the edges between $v_3$ and $v_4$ has the color $\cf$, and
the edge between $v_1$ and $v_5$ is colored with $\ca$.
Since there are at least two edges between $v_4$ and $v_5$,
the mate $M_{\cf}$ does not exist.
\end{proof}

\begin{lemma}
\label{lm-5-face-multi}
In a minimal counterexample, no $5$-face is adjacent to five multigons.
\end{lemma}

\begin{proof}
Let $f=v_1v_2v_3v_4v_5$ be such a $5$-face.
By Lemma~\ref{lm-5-face-trigon2},
all the multigons adjacent to $f$ are bigons.
Consider an $e$-coloring described in Lemma~\ref{lm-e-trigon} for the eligible sequence $v_1v_2v_3v_5$.
The mate $M_{\ce}$ contains either both edges between $v_3$ and $v_4$ or between $v_4$ and $v_5$.
By symmetry, we can assume it contains the two edges between $v_3$ and $v_4$.
Consequently, these two edges are colored with $\ce$ and $\cf$,
which is impossible since one of the edges between $v_2$ and $v_3$ is also colored with $\ce$ or $\cf$.
\end{proof}

We finish this section with a lemma on the structure of $6$-faces.

\begin{lemma}
\label{lm-6-face-32}
In a minimal counterexample $G$,
no $6$-face $f$ is adjacent to three bigons and two trigons.
\end{lemma}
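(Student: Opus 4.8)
The plan is to mimic the proof of Lemma~\ref{lm-6-face-23}, again using a six-vertex swap to rearrange the multigons around $f$ into three quadragons, and then to recover a $6$-edge-coloring of $G$ by finding a common color in all three quadragons.  First I would fix the cyclic order of the five multigons on the boundary of $f$.  By Lemma~\ref{lm-multigon}, no two trigons adjacent to $f$ can be $f$-incident, so the two trigons are separated on both sides by bigons.  Since $f$ is a $6$-face with exactly $6$ incident edges and it is adjacent to five multigons, exactly one pair of consecutive boundary edges lies in distinct (non-multigon) faces while the other four pairs are covered; after relabelling I may assume the trigons sit between $v_1v_2$ and between $v_3v_4$, and bigons sit between $v_2v_3$, between $v_4v_5$ and between $v_5v_6$ (so $v_6v_1$ is the edge in no multigon).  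Because none of these multigons is a quadragon (Lemma~\ref{lm-face-quadragon} excludes a quadragon adjacent to a $5$- or $6$-face would be too strong as stated, but a minimal counterexample never has two incident multigons of orders summing to more than five, and the swap below never creates such a configuration), the $v_1v_2v_3v_4v_5v_6$-swap is applicable, and by the minimality of $G$ together with Lemma~\ref{lm-swap} the resulting graph $G'$ has a $6$-edge-coloring.

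The swap replaces the edges $v_2v_3,\,v_4v_5,\,v_6v_1$ by the edges $v_1v_2,\,v_3v_4,\,v_5v_6$, so in $G'$ the pair $v_1,v_2$ is joined by a quadragon, the pair $v_3,v_4$ by a quadragon, and the pair $v_5,v_6$ by a trigon (three old bigon-edges together with the single new edge $v_5v_6$ — wait, only one bigon was between $v_4v_5$ and one between $v_5v_6$; together with the inserted $v_5v_6$ edge this is a trigon, and the two inserted edges $v_1v_2$ and $v_3v_4$ each join a trigon, producing quadragons).  Let $C_1$ be the set of colors used on the quadragon between $v_1$ and $v_2$, $C_2$ the colors on the quadragon between $v_3$ and $v_4$, and $C_3$ the colors on the trigon between $v_5$ and $v_6$, so $|C_1|=|C_2|=4$ and $|C_3|=3$.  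In $G'$ the vertices $v_6$ and $v_1$ are not joined by an edge anymore, but $v_2$ and $v_3$ are joined by an (old) bigon edge, so the colors of those two parallel edges lie in both $C_1$ and $C_2$, giving $|C_1\cap C_2|\ge 2$; similarly $v_4$ and $v_5$ are joined by an old bigon edge in $G'$, so $|C_2\cap C_3|\ge 2$.

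Now I combine these intersection bounds.  From $|C_1\cap C_2|\ge 2$ and $|C_2\cap C_3|\ge 2$ inside the $6$-element color set, with $|C_2|=4$, I want a color lying in all three sets.  If $C_1\cap C_2\cap C_3=\emptyset$, then $C_1\cap C_2$ and $C_2\cap C_3$ are disjoint subsets of $C_2$, each of size at least two, forcing $|C_2|\ge 4$ with equality and $C_2 = (C_1\cap C_2)\,\dot\cup\,(C_2\cap C_3)$, each part of size exactly two.  In that case the two colors missing from $C_3$ are exactly $C_1\cap C_2$, and I still have the extra edge $v_5v_6$ inserted by the swap and the extra edges $v_1v_2$, $v_3v_4$ to play with, plus the fact that $v_6$ and $v_1$ are \emph{adjacent in $G$} but not in $G'$: the edge colors forced on $v_1v_6$ in any coloring of $G$ must be compatible.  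I expect the clean way out is to observe, exactly as in the last case of the proof of Lemma~\ref{lm-35-face-bigon-trigon}, that disjointness would force $C_1=C_3$, which contradicts $|C_1|=4\neq 3=|C_3|$; concretely, the colors of the single new edge $v_5v_6$ and of $v_6v_1$ (in $G$) constrain which colors of $C_1$ can appear on $v_6$, and pushing this through shows $C_1\cap C_2\cap C_3\neq\emptyset$ after all.  Pick such a color $c$, delete the $c$-colored edge from each of the three multigons between $v_1v_2$, $v_3v_4$, $v_5v_6$ in $G'$, and insert the three $G$-edges $v_2v_3$, $v_4v_5$, $v_6v_1$ all colored $c$: since in $G'$ each of $v_1,\dots,v_6$ was incident to exactly one $c$-edge (a $6$-edge-coloring is proper) and we are removing one $c$-edge and adding one $c$-edge at each of $v_2,\dots,v_6,v_1$ — and at $v_1$ we removed the $c$-edge in the $v_1v_2$-multigon and added $v_6v_1$, and similarly at $v_2$ — this is again a proper $6$-edge-coloring, now of $G$, contradicting that $G$ is a minimal counterexample.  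The main obstacle is the degenerate case $|C_1\cap C_2|=|C_2\cap C_3|=2$ with empty triple intersection; ruling it out requires the same "forced equality of color sets of opposite multigons, contradicting the difference in their orders" trick used at the end of Lemma~\ref{lm-35-face-bigon-trigon}, carefully tracking which inserted/old edges lie at $v_6$ and $v_1$.
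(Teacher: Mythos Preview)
Your overall strategy---perform the $v_1v_2v_3v_4v_5v_6$-swap, find a common color in the three new multigons, and undo the swap---is exactly what the paper does.  However, the execution has two genuine gaps.

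\textbf{The case split is incomplete.}  After placing the single non-multigon edge at $v_6v_1$ (which is legitimate, by Lemma~\ref{lm-face-trigon}), there are several inequivalent ways to distribute two trigons and three bigons over the remaining five positions subject to Lemma~\ref{lm-multigon}.  Your sentence ``the two trigons are separated on both sides by bigons'' does not force anything: one of the separators may well be the single edge $v_6v_1$.  Concretely, the arrangement with trigons at $v_2v_3$ and $v_4v_5$ (so that $b_1,b_2,b_3$ are \emph{all trigons} in $G'$) is not equivalent to the one you treat, and your quadragon-size bounds do not apply there.  The paper handles this cleanly by \emph{not} pinning down the full pattern: it only distinguishes whether $b_2$ is a trigon or a quadragon in $G'$, and in each case derives the needed intersection bounds from whatever edges remain between $v_2,v_3$ and between $v_4,v_5$.

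\textbf{The intersection bound is miscounted.}  You write that the color(s) of the surviving edge(s) between $v_2$ and $v_3$ ``lie in both $C_1$ and $C_2$''.  This is backwards: in a proper $6$-edge-coloring of $G'$, the four edges of the quadragon $b_1$ at $v_2$ already use the four colors of $C_1$, so any other edge at $v_2$---in particular the remaining $v_2v_3$ edge---has a color \emph{outside} $C_1$, and likewise outside $C_2$.  Hence one surviving edge gives $|C_1\cup C_2|\le 5$, i.e.\ $|C_1\cap C_2|\ge |C_1|+|C_2|-5$.  In your specific arrangement this yields $|C_1\cap C_2|\ge 3$ (not merely $\ge 2$), and together with $|C_2\cap C_3|\ge 2$ and $|C_2|=4$ you get $C_1\cap C_2\cap C_3\neq\emptyset$ immediately---no ``degenerate case'' survives, and the hand-wavy $C_1=C_3$ argument (which does not actually follow) is unnecessary.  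The paper uses exactly this corrected counting, adapted to each sub-case.
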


\begin{proof}
Let $v_1\cdots v_6$ be the vertices of the face $f$.
Lemma~\ref{lm-face-trigon} yields that we can assume
that there are bigons between $v_1$ and $v_2$, between $v_3$ and $v_4$, and between $v_5$ and $v_6$,
that there are trigons between $v_2$ and $v_3$ and between $v_4$ and $v_5$, and
that there is no multigon between $v_1$ and $v_6$.

Consider the graph $G'$ obtained from $G$ by the $v_1v_2v_3v_4v_5v_6$-swap.
By Lemma~\ref{lm-swap} and minimality of $G$, $G'$ has a $6$-edge-coloring.
Let $C_i$ be the set of three colors assigned to the edges of the trigon between $v_{2i-1}$ and $v_{2i}$ in $G'$, $i\in\{1,2,3\}$.
Since $G'$ contains a bigon between $v_2$ and $v_3$, we obtain that $|C_1\cap C_2|\ge 2$.
Analogously, we get that $|C_2\cap C_3|\ge 2$. It follows that there exists a color $c$ contained in all the three sets $C_1$, $C_2$ and $C_3$.
Removing the edges with the color $c$ from the three trigons and
coloring the edges of $G$ not present in $G'$ with $c$ yields a $6$-edge-coloring of $G$.
\end{proof}

\section{Discharging phase}
\label{sect-charge}

\subsection{Discharging rules}
\label{sub-rules}

We consider a minimal counterexample $G$ and
assign every $d$-face, $d\ge 3$, $d-3$ units of charge,
every bigon $-1$ unit of charge, every trigon $-2$ units of charge and
every quadragon $-3$ units of charge.
Vertices are assigned no charge.
This charge is referred to as {\em initial} charge.

Let us estimate the total amount of initial charge.
Since the minimal counterexample is $6$-regular,
Euler formula implies that $|F|=2m/3+2$ where $F$ is the set of faces of $G$ and $m$ is the number of its edges.
If we view a multigon of order $k$ as $k$ faces of size two,
then each $d$-face of $G$, $d\ge 2$, is assigned $d-3$ units of charge.
It follows that the initial amount of charge is equal to
$$\sum_{f\in F}\left(|f|-3\right)=2m-3|F|=-6$$
where $|f|$ stands for the size of a face $f$. In particular,
the amount of initial charge is negative.

\begin{figure}
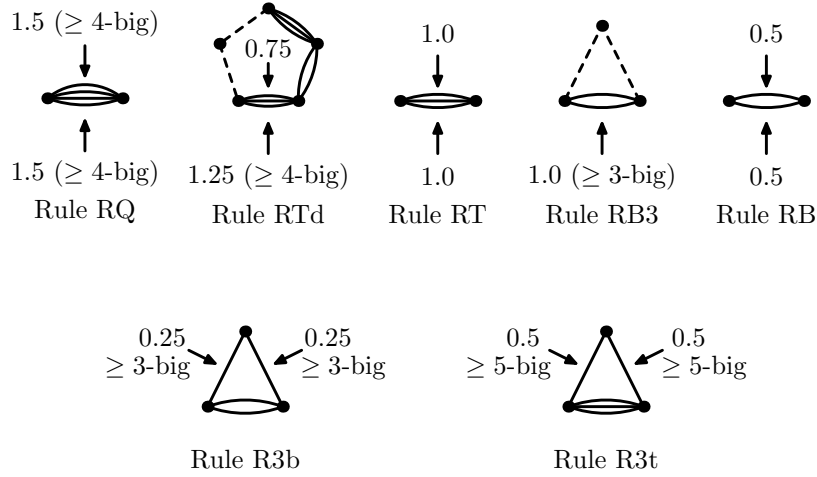

\begin{center}
\epsfbox{tjoin6.7}
\hskip 1ex
\epsfbox{tjoin6.8}
\epsfbox{tjoin6.13}
\epsfbox{tjoin6.14}
\epsfbox{tjoin6.16}
\vskip 2ex
\epsfbox{tjoin6.17}
\epsfbox{tjoin6.18}
\end{center}
\caption{Illustration of discharging rules (dashed edges denote edges that can be single or contained in multigons).}
\label{fig-rules}
\end{figure}

Next, charge gets redistributed among $\ge 3$-faces and multigons using
the following rules (also see Figure~\ref{fig-rules}).
We attempt to name the rules mnemotechnically: the names start with R, followed by
a character B, T, Q and 3 to denote the type of faces it involves (bigons,
trigons, quadragons and $3$-faces) and sometimes by another character
to distinguish the rules further (e.g., ``d'' for dangerous, ``b'' for a bigon and ``t'' for a trigon).
\begin{description}
\item[Rule RQ] Every quadragon adjacent to a $\ge 4$-big face $f$
                receives $1.5$ units of charge from $f$.
\item[Rule RTd] Every dangerous trigon receives $0.75$ units of charge from the adjacent dangerous $5$-face and
                it receives $1.25$ units of charge from the other adjacent face if that face is $\ge 4$-big.
\item[Rule RT] Every trigon that is not dangerous
               receives $1$ unit of charge from each adjacent face.
\item[Rule RB3] A bigon adjacent to a $3$-face and a $\ge 3$-big face $f$
                receives $1$ unit of charge from $f$.
\item[Rule RB] A bigon such that Rule RB3 does not apply to it
               receives $0.5$ units of charge from each adjacent face.
\item[Rule R3b] A $3$-face $f$ that is adjacent to a bigon and two $\ge 3$-big faces
               receives $0.25$ units of charge from each adjacent $\ge 3$-big face.
\item[Rule R3t] A $3$-face $f$ that is adjacent to a trigon and a $\ge 5$-big face $f'$
                receives $0.5$ units of charge from $f'$.
\end{description}
Charge of $\ge 3$-faces and multigons after these rules are applied is referred to as {\em final} charge.
In the remainder of this section, we show that final charge of every face and every multigon is non-negative.

\subsection{Final charge of multigons}

We first analyze final charge of multigons.

\begin{lemma}
\label{lm-final-multigon}
The final amount of charge of every multigon of a minimal counterexample $G$ is non-negative.
\end{lemma}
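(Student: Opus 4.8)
The plan is to split according to the order of the multigon, which by Lemma~\ref{lm-multigon} is $2$, $3$ or $4$, and to check in each case that the charge received from the rules of Section~\ref{sub-rules} exactly cancels the initial deficit; in fact every multigon will turn out to end with final charge exactly $0$. Since no discharging rule ever takes charge away from a multigon (multigons only ever appear as recipients), it suffices to total up what each multigon gains. The simplest case is a bigon, whose initial charge is $-1$: precisely one of Rules RB3, RB5, RB applies to it, and each of these transfers a total of $1$ unit to the bigon (Rules RB3 and RB5 give $1$ unit from the $\ge 3$-big, respectively $\ge 4$-big, incident face; Rule RB gives $0.5$ from each of the two incident faces, which are genuine $\ge 3$-faces because a $2$-face incident with the bigon would enlarge the multigon). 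Hence a bigon ends with charge $0$.

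A quadragon has initial charge $-3$, and it is a dangerous multigon by definition. By Lemma~\ref{lm-face-quadragon} both faces incident with a quadragon are $\ge 5$-big, hence $\ge 4$-big, and neither is dangerous, since a dangerous face is a $5$- or a $7$-face and is $\le 2$-big. Therefore Rule RMb fires across each of the two incident faces, transferring $1.5+1.5=3$ units, and no other rule adds anything; the quadragon ends with charge $0$.

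A trigon has initial charge $-2$. If it is not dangerous, then Rules RMb and RMd do not apply to it, so Rule RT applies and it collects $1$ unit from each of its two incident faces (again $\ge 3$-faces), a total of $2$, so it ends at $0$. If the trigon $t$ is dangerous, then by the definition of a dangerous multigon $t$ is adjacent to a dangerous $5$- or $7$-face $f'$, so Rule RMd gives $t$ an additional $0.5$; since $f'$ is dangerous it is $\le 2$-big, so by Lemma~\ref{lm-trigon-2big} the other face $f$ incident with $t$ is $\ge 4$-big, and hence Rule RMb gives $t$ a further $1.5$ (Rule RT not applying, because Rule RMb does). Again $t$ gains $2$ units in total and ends at $0$.

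The one step that genuinely draws on the structure analysis of Section~\ref{sect-structure}, and the place I expect the real work to lie, is the dangerous-trigon subcase above: one must be certain that a dangerous trigon is incident with \emph{exactly} one dangerous face while its other incident face is $\ge 4$-big. Both of these follow from Lemma~\ref{lm-trigon-2big} --- a trigon cannot be incident with two $\le 2$-big faces --- combined with the observation that every dangerous face is $\le 2$-big; the rest of the argument is just bookkeeping with the discharging rules.
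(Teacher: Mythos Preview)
Your proof is correct and follows essentially the same approach as the paper's own proof: the same case split into bigons, trigons (dangerous versus non-dangerous), and quadragons, invoking Lemma~\ref{lm-multigon}, Lemma~\ref{lm-face-quadragon}, and Lemma~\ref{lm-trigon-2big} at the same points. Your version is slightly more explicit --- you spell out why a dangerous face is $\le 2$-big and why the two faces incident with a bigon are genuine $\ge 3$-faces --- but the logic is identical.
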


\begin{proof}
Recall that the initial amount of charge of a multigon of order $k+1$ is $-k$.
By Lemma~\ref{lm-multigon}, $G$ contains only bigons, trigons and quadragons.
Every bigon receives either $1$ unit of charge by Rule RB3 from the adjacent $\ge 3$-big face or
$0.5$ unit of charge by Rule RB from each adjacent face.
Hence, every bigon receives $1$ unit of charge in total.

Let us consider trigons.
If a trigon is not dangerous, then Rule RT applies twice.
If a trigon is dangerous,
then one of the faces adjacent to it is $\ge 4$-big by Lemma~\ref{lm-trigon-2big} (note that
the dangerous $5$-face adjacent to it is $\le 2$-big) and both parts of Rule RTd apply.
Hence, every trigon receives two units of charge in total.

Finally, every quadragon receives $1.5$ units of charge by Rule RQ
from each adjacent face since both faces adjacent to it are $\ge 4$-big by Lemma~\ref{lm-face-quadragon}.
\end{proof}

\subsection{Final charge of $3$-faces and $4$-faces}

In this subsection, we analyze final charge of $3$-faces and $4$-faces.
Let us start with $3$-faces.

\begin{lemma}
\label{lm-final-3-face}
The final amount of charge of every $3$-face $f$ of a minimal counterexample $G$ is non-negative.
\end{lemma}

\begin{proof}
Faces of a minimal counterexample send charge to adjacent multigons and $3$-faces only.
Hence, if $f$ sends out any charge, it is $\le 2$-big. In particular,
$f$ can send out some charge by Rules RT and RB only.
Consequently, if $f$ is adjacent to no multigon,
$f$ sends out and receives no charge and its final charge is zero.

If $f$ is not adjacent to multigons of order three or more,
then it is adjacent to at most two bigons by Lemma~\ref{lm-3-face-23-bigon}, and
if it is adjacent to two bigons, each of these bigons is adjacent to a $\ge 5$-big face.
Hence, Rule RB3 applies and Rule RB does not.
If $f$ is adjacent to a single bigon and the other face adjacent to the bigon is $\ge 3$-big,
again, Rule RB3 applies and $f$ sends out no charge.
If the other face adjacent to the bigon is $\le 2$-big,
then $f$ is adjacent to two $\ge 3$-big faces by Lemma~\ref{lm-3-face-1-bigon}.
In this case, $f$ sends a half of unit of charge to the bigon by Rule RB and
receives twice a quarter of unit of charge by Rule R3b.

If $f$ is adjacent to a trigon, then Lemma~\ref{lm-3-face-trigon} yields that
$f$ is adjacent to no other multigon and
the two other faces adjacent to $f$ are $\ge 5$-big.
Hence, $f$ sends the trigon $1$ unit of charge by Rule RT and
receives $0.5$ from each of the adjacent $\ge 5$-big face by Rule R3t.
The final charge of $f$ is zero.

Since $f$ cannot be adjacent to a quadragon by Lemma~\ref{lm-face-quadragon},
the proof is completed.
\end{proof}

Let us analyze final charge of $4$-faces.

\begin{lemma}
\label{lm-final-4-face}
The final amount of charge of every $4$-face $f$ of a minimal counterexample $G$ is non-negative.
\end{lemma}

\begin{proof}
By Lemma~\ref{lm-face-quadragon}, $f$ can be adjacent only to multigons of order at most three.
If $f$ is $4$-big, it sends out no charge.
If $f$ is $3$-big, only Rules RT, RB3, RB or R3b can apply and
at most one of them applies.
Hence, $f$ sends out at most one unit of charge and its final charge is non-negative.

In the rest of the proof, we assume that $f$ is $\le 2$-big
which implies that only Rules RT and RB can apply.
If $f$ is adjacent to a trigon,
then $f$ is adjacent to no other multigon by Lemma~\ref{lm-4-face-trigon}.
Hence, Rule RT applies once and no other rule can apply to $f$.
Consequently, $f$ sends out one unit of charge and its final charge is zero.

If $f$ is adjacent to no multigons of order three or more,
then, by Lemma~\ref{lm-4-face-3-bigon}, $f$ is adjacent to at most two bigons.
Hence, Rule RB can apply at most twice to $f$ and
thus the final amount of charge of $f$ is non-negative.
\end{proof}

\subsection{Final charge of $\ge 5$-faces}

In this subsection, we analyze the amount of final charge of $\ge 5$-faces.
The case of $5$-faces needs to be treated separately. So, we start with this case.

\begin{lemma}
\label{lm-final-5-face}
The final amount of charge of every $5$-face $f$ of a minimal counterexample $G$ is non-negative.
\end{lemma}

\begin{proof}
If $f$ is $\ge 4$-big, then at most one of the rules applies to $f$.
Consequently,
$f$ sends out at most $1.5$ units of charge and its final charge is positive.

If $f$ is $3$-big, then either at most two rules apply to $f$ each once or
the same rule applies to $f$ twice. Since $f$ cannot send out charge by Rule RQ and $1.25$ units of charge by Rule RTd,
$f$ sends out at most two units of charge in total and its final charge is non-negative.

If $f$ is $\le 2$-big, then only Rules RTd, RT and RB can apply to $f$.
Since no two trigons are incident by Lemma~\ref{lm-multigon},
$f$ is adjacent to at most two trigons. If $f$ is adjacent to two trigons and
no other multigons, Rule RT applies twice and the final charge of $f$ is zero.
If $f$ is adjacent to two trigons and another multigon,
then $f$ is dangerous by Lemma~\ref{lm-5-face-trigon2}.
Hence, $f$ sends twice $0.75$ units of charge by Rule RTd and once $0.5$ units of charge by Rule RB.
We conclude that the final amount of charge of $f$ is zero.

It remains to analyze the case that $f$ is adjacent to at most one trigon.
If $f$ is adjacent to a single trigon, then it is adjacent to at most two bigons by Lemma~\ref{lm-5-face-trigon2}.
Consequently, Rule RT applies once and Rule RB applies at most twice.
If $f$ is adjacent to no trigons, then it is adjacent to at most four bigons by Lemma~\ref{lm-5-face-multi} and
Rule RB applies at most four times.
In all the cases, $f$ sends out at most two units of charge and its final amount of charge is non-negative.
\end{proof}

It remains to analyze the final charge of $\ge 6$-faces. We distinguish three cases based on how big the considered face is.

\begin{lemma}
\label{lm-final-2-big}
In a minimal counterexample,
the amount of final charge of every $\le 2$-big $\ge 6$-face $f$ is non-negative.
\end{lemma}

\begin{proof}
Let $\ell\ge 6$ be the size of $f$.
Since $f$ is $\le 2$-big, it can send out charge only by Rules RT and RB.
If $f$ is adjacent to no trigon, then $f$ sends out at most $\ell/2$ units of charge.
Since the initial amount of charge of $f$ is $\ell-3\ge\ell/2$ (recall $\ell\ge 6$),
the final charge of $f$ is non-negative.

In the rest, we assume that $f$ is adjacent to a trigon.
Lemma~\ref{lm-face-trigon} implies that $f$ is adjacent to at most $\ell-1$ multigons.
Moreover, no two trigons are $f$-incident by Lemma~\ref{lm-multigon}.
Hence, Rule RT applies at most $\lfloor\ell/2\rfloor$ times and Rules RT and RB together
apply at most $\ell-1$ times. Consequently, $f$ sends out at most
\begin{equation}
\frac{1}{2}\left(\ell-1+\left\lfloor\frac{\ell}{2}\right\rfloor\right)
\label{eq-final-2-big}
\end{equation}
units of charge. The value of (\ref{eq-final-2-big}) is at most $\ell-3$ unless $\ell\in\{6,7,8\}$.
By considering the number of bigons and trigons adjacent to $f$,
we derive that $f$ sends out at most the amount of its initial charge unless one of the
following cases applies (recall that one of the edges incident to $f$ is not in a multigon and
no two trigons are $f$-incident):
\begin{itemize}
\item The face $f$ is a $6$-face and $f$ is adjacent to two trigons and three bigons.
\item The face $f$ is a $6$-face and $f$ is adjacent to three trigons and at least one bigon.
\item The face $f$ is a $7$-face and $f$ is adjacent to three trigons and three bigons.
\item The face $f$ is a $8$-face and $f$ is adjacent to four trigons and three bigons.
\end{itemize}
For every trigon $t$ adjacent to $f$,
$f$ contains an edge $e$ not $f$-incident to $t$ such that $e$ is not contained in a multigon by Lemma~\ref{lm-face-trigon}.
Since the trigons adjacent to $t$ are not $f$-incident, it follows that only the first of the four cases can possibly appear.
However, this case is excluded by Lemma~\ref{lm-6-face-32}.
\end{proof}

\begin{lemma}
\label{lm-final-3-big}
In a minimal counterexample,
the amount of final charge of every $3$-big $\ge 6$-face $f$ is non-negative.
\end{lemma}

\begin{proof}
Let $\ell\ge 6$ be the size of $f$.
Since $f$ is $3$-big, Rules RQ and RTd never apply.
It follows that $f$ sends out at most $\ell-3$ times (it does not send any charge to the three adjacent $\ge 4$-faces) at most one unit of charge.
Consequently, its final charge is non-negative.
\end{proof}

Before proving the final lemma (Lemma~\ref{lm-final-face}) of this section
which deals with $\ge 4$-big $\ge 6$-faces,
we have to state two auxiliary lemmas.
The two lemmas use the same notation
which we reuse in the proof of Lemma~\ref{lm-final-face}.

\begin{lemma}
\label{lm-final-next-A}
Let $G$ be a minimal counterexample and $f$ an $\ell$-face, $\ell\ge 6$.
Let $v_1,\ldots,v_{\ell}$ be the vertices incident with the face $f$ in the cyclic order around $f$, and
let $f_1,\ldots,f_{\ell}$ be the face or the multigon adjacent to $f$ through the edge $v_iv_{i+1}$ (indices taken modulo $\ell$).
Finally, let $s_i$ be the amount of charge sent by $f$ to $f_i$, $i=1,\ldots,\ell$.
It holds that
\begin{equation}
s_i+s_{i+1}\le 2\label{eq-final-next-A}
\end{equation}
for every $i=1,\ldots,\ell$ (indices again taken modulo $\ell$).
\end{lemma}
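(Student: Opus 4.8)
The plan is to bound, for two consecutive edges $v_iv_{i+1}$ and $v_{i+1}v_{i+2}$ on $f$, the total charge $s_i+s_{i+1}$ that $f$ sends across them, by examining the possible combinations of what $f_i$ and $f_{i+1}$ can be. The largest single payment a face makes is $1.5$ units (Rule RMb, to a dangerous multigon adjacent to a $\ge 4$-big face), so the only way to violate \eqref{eq-final-next-A} would be to have $s_i=1.5$ together with $s_{i+1}>0.5$, or $s_i=s_{i+1}>1$, or similar; I will rule these out using the structural lemmas of Section~\ref{sect-structure}. First I would record the menu of possible values of $s_i$: $s_i\in\{0,0.25,0.5,1,1.5\}$, where $1.5$ comes only from Rule RMb, $1$ comes from Rule RT or Rules RB3/RB5, $0.5$ from Rule RMd, RB or R3t, and $0.25$ from Rule R3.

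The key case split is on whether $f_i$ is a multigon receiving $1.5$ units. If $s_i=1.5$, then $f_i$ is a dangerous multigon (a quadragon, or a dangerous trigon) and $f$ is $\ge 4$-big. I then need $s_{i+1}\le 0.5$. Here $f_{i+1}$ is either the face/multigon sharing the edge $v_{i+1}v_{i+2}$; since $f_i$ and $f_{i+1}$ share the vertex $v_{i+1}$, Lemma~\ref{lm-multigon} forces strong restrictions: if $f_i$ is a quadragon then no multigon can be incident with it, so $f_{i+1}$ is an ordinary face, to which $f$ sends charge only by Rules R3 or R3t, i.e. at most $0.5$; if $f_i$ is a dangerous trigon then $f_{i+1}$ can be at most a bigon, and $f$ sends at most $0.5$ to a bigon unless Rule RB3 or RB5 applies — but those rules require the bigon to be adjacent to a $3$-face (resp.\ a $5$-face adjacent to five multigons) on its other side, while here the bigon is $f_i$-incident with a dangerous trigon, and I would argue (invoking the definition of dangerous multigon and Lemmas~\ref{lm-3-face-trigon}, \ref{lm-3-face-23-bigon}) that this configuration cannot also make the bigon eligible for RB3/RB5; so $s_{i+1}\le 0.5$ and the sum is at most $2$.

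Next I would handle the case $s_i=1$. Then I need $s_{i+1}\le 1$, which is automatic unless $s_{i+1}=1.5$; but $s_{i+1}=1.5$ puts us back in the previous case by symmetry. The remaining cases have $s_i\le 0.5$, and then $s_{i+1}\le 1.5$ gives the bound directly — again unless $s_{i+1}=1.5$, which is symmetric to the first case. So the whole argument reduces to the single assertion: \emph{if $s_i=1.5$ then $s_{i+1}\le 0.5$}, plus its mirror image. The main obstacle is the bookkeeping in that assertion: one must be careful that when $f_i$ is a dangerous trigon $f$-incident with a bigon $f_{i+1}$, the bigon is not simultaneously collecting a full unit from $f$ via Rule RB3 or RB5. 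I expect this to follow because a dangerous trigon is, by definition, $f$-incident with a multigon (when the dangerous face is a $5$-face) or with two multigons (when it is a $7$-face), so the neighbour across $v_{i+1}v_{i+2}$ that shares the vertex $v_{i+1}$ with the trigon is pinned down by Lemma~\ref{lm-multigon}, and the $3$-face or five-multigon $5$-face required by RB3/RB5 would force a vertex incident with two multigons of total order at least six, contradicting Lemma~\ref{lm-multigon}. Once that is settled, \eqref{eq-final-next-A} follows in every case.
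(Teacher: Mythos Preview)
Your overall strategy matches the paper's: reduce to the single assertion that $s_i=1.5$ forces $s_{i+1}\le 0.5$. Your handling of the quadragon case is correct, and your degree-counting argument (``two multigons of total order at least six'') correctly disposes of Rule~RB5 and of the dangerous-$7$-face subcase (there the trigon must be $f'$-incident with a multigon at \emph{each} end, forcing a third multigon at $v_{i+1}$).

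The gap is the remaining subcase of Rule~RB3. Suppose $f_i$ is a trigon made dangerous by a dangerous $5$-face $f'$, and the single $f'$-incident multigon required by the definition sits at $v_i$ rather than at $v_{i+1}$. Then at $v_{i+1}$ the six edges split as $3$ (the trigon $f_i$) $+$ $2$ (the bigon $f_{i+1}$) $+$ $1$ (a lone edge $e_4$), so Lemma~\ref{lm-multigon} is satisfied and no degree contradiction arises. Lemmas~\ref{lm-3-face-trigon} and~\ref{lm-3-face-23-bigon} do not exclude this either: the $3$-face $g$ on the far side of $f_{i+1}$ need not be adjacent to any trigon, and need not be adjacent to two bigons, so neither hypothesis is met. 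Thus nothing you cite prevents $s_{i+1}=1$ via RB3 here.

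The paper closes exactly this hole with Lemma~\ref{lm-35-face-bigon-trigon}. The lone edge $e_4$ is shared by $g$ and $f'$, so $g$ and $f'$ are adjacent; together with the bigon $f_{i+1}\subset g$ and the trigon $f_i\subset f'$ (which is $f'$-incident to a bigon by hypothesis), this is precisely the forbidden configuration of that lemma at the vertex $v_{i+1}$. Without invoking Lemma~\ref{lm-35-face-bigon-trigon} (or reproving it), your argument does not rule out RB3 in this subcase.
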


\begin{proof}
Assume that $s_i+s_{i+1}>2$. By symmetry, we can assume $s_i>1$, i.e., one of Rules RQ or RTd applies with respect to $f_i$.
If Rule RQ applies, then $f_i$ is a quadragon.
It follows that $f_{i+1}$ is not a multigon by Lemma~\ref{lm-multigon} and $s_{i+1}\le 0.5$ since only Rules R3b and R3t can possibly apply with respect to $f_{i+1}$.
If Rule RTd applies, then $s_{i+1}$ can be larger than $0.5$ only if Rule RB3 applies with respect to $f_{i+1}$.
However, this case is excluded by Lemma~\ref{lm-35-face-bigon-trigon}.
\end{proof}

\begin{lemma}
\label{lm-final-next-B}
Let $G$ be a minimal counterexample and $f$ an $\ell$-face, $\ell\ge 6$.
Let $v_1,\ldots,v_{\ell}$ be the vertices incident with the face $f$ in the cyclic order around $f$, and
let $f_1,\ldots,f_{\ell}$ be the face or the multigon adjacent to $f$ through the edge $v_iv_{i+1}$ (indices taken modulo $\ell$).
Finally, let $s_i$ be the amount of charge sent by $f$ to $f_i$, $i=1,\ldots,\ell$.
It holds that
\begin{equation}
s_i+s_{i+1}+s_{i+2}\le 3\label{eq-final-next-B}
\end{equation}
for every $i=1,\ldots,\ell$ (indices again taken modulo $\ell$).
\end{lemma}

\begin{proof}
If $s_i\le 1$ or $s_{i+2}\le 1$, the statement follows from Lemma~\ref{lm-final-next-A}.
So, we can assume that $s_i>1$ and $s_{i+2}>1$.
It follows that both $f_i$ and $f_{i+2}$ are multigons of order three or more and
that $s_{i+1}<1$ by Lemma~\ref{lm-final-next-A}.
Since both $f_i$ and $f_{i+2}$ are multigons of order three or more,
Rule R3t cannot apply with respect to $f_{i+1}$.
We conclude that the only two rules that can apply with respect to $f_{i+1}$ are Rules RB and R3b.

If both $f_i$ and $f_{i+2}$ are quadragons, then neither Rule RB nor Rule R3b can apply (by Lemma~\ref{lm-multigon}).
It follows $s_i=s_{i+2}=1.5$ and $s_{i+1}=0$ in this case.
If one of $f_i$ and $f_{i+2}$ is a quadragon, then $f_{i+1}$ is not a bigon and we get that $s_{i+1}\le 0.25$.
Since we have $s_i+s_{i+2}=2.75$, the statement of the lemma follows.
Finally, if both $f_i$ and $f_{i+2}$ are dangerous trigons,
we have $s_i+s_{i+2}=2.5$ and the lemma follows since $s_{i+1}\le 0.5$.
\end{proof}

We are now ready to analyze the amount of final charge of every $\ge 4$-big $\ge 6$-face.

\begin{lemma}
\label{lm-final-face}
In a minimal counterexample,
the amount of final charge of every $\ge 4$-big $\ge 6$-face $f$ is non-negative.
\end{lemma}

\begin{proof}
Let us assume that $f$ is a $k$-big $\ell$-face (note that $k\ge 4$ and $\ell\ge 6$).
Adopt the notation from the statements of Lemmas~\ref{lm-final-next-A} and \ref{lm-final-next-B}.
Further, let $i_1,\ldots,i_k$ be the indices $i$ such that $f_i$ is a $\ge 4$-face and
set $I_j=\{i_j+1,\ldots,i_{j+1}-1\}$ (indices modulo $\ell$ and $k$ where appropriate).
If $i_j+1=i_{j+1}$, then $I_j=\emptyset$.
Lemmas~\ref{lm-final-next-A} and~\ref{lm-final-next-B} imply that
\begin{equation}
\sum_{i\in I_j}s_i\le |I_j|\mbox{ for every $j=1,\ldots,k$ unless $|I_j|=1$.}\label{eq-final-face}
\end{equation}
First assume that $k\ge 6$.
Since it holds $s_i\le 1.5$ for every $i=1,\ldots,\ell$,
we obtain that
\begin{equation}
\sum_{i\in I_j}s_i\le |I_j|+0.5\mbox{ for every $j=1,\ldots,k$.}\label{eq-final-face-6+}
\end{equation}
Summing up the estimates (\ref{eq-final-face-6+}),
we get that the face $f$ sends out at most
$$s_1+\cdots+s_{\ell}\le\sum_{j=1}^k \left(|I_j|+1/2\right)=\ell-k/2\le\ell-3$$
units of charge.
In particular, its final charge is non-negative.

We now assume that $k\in\{4,5\}$.
If $|I_j|=1$, then Lemma~\ref{lm-face-quadragon} implies there is no quadragon between $v_{i_j+1}$ and $v_{i_j+2}$.
In particular, if $|I_j|=1$, then $s_{i_j+1}\le 1.25$. It follows that
\begin{equation}
\sum_{i\in I_j}s_i\le |I_j|+0.25\mbox{ for every $j=1,\ldots,k$.}\label{eq-final-face-45}
\end{equation}
Summing up the estimates (\ref{eq-final-face-45}) yields that
the face $f$ sends out at most
$$s_1+\cdots+s_{\ell}\le\sum_{j=1}^k \left(|I_j|+1/4\right)=\ell-3k/4\le\ell-3$$
units of charge.
We conclude that its final charge is non-negative.
\end{proof}

\subsection{Finale}

In order to prove Theorem~\ref{thm-edge-coloring} which implies Theorem~\ref{thm-main},
we have to exclude the existence of a minimal counterexample. Assume that $G$ is a minimal
counterexample and assign charge to the multigons and $\ge 3$-faces of $G$
as described in Subsection~\ref{sub-rules} and
apply the Rules as described.
By Lemmas~\ref{lm-final-multigon}--\ref{lm-final-face}, the final amount of charge
of every multigon and every face of $G$ is non-negative. Since charge is preserved
during the application of the rules and the sum of the amounts of initial charge
is negative, a minimal counterexample cannot exist.
This establishes Theorem~\ref{thm-edge-coloring}.

\section*{Acknowledgement}

The authors would like to thank an anonymous referee for suggesting a lemma that appears as Lemma~\ref{lm-e-trigon} in the paper;
the lemma replaced a weaker statement contained in the original draft.
This change resulted in a significant simplification of the presented arguments and shortening the paper.

\end{document}